\documentclass[12pt, reqno, oneside]{amsart}
\usepackage{amsmath}
\usepackage{amsthm}
\usepackage{amssymb}
\usepackage[style=alphabetic, maxnames=50]{biblatex}
\usepackage{mathdots}
\usepackage{mathtools}
\usepackage{graphicx}
\usepackage{hyperref}
\usepackage{xcolor}
\hypersetup{
    colorlinks=true,
    linkcolor=cyan,      
    pdfpagemode=FullScreen,
    citecolor=orange,
}
\newtheorem{corollary}{Corollary}[section]
\newtheorem{definition}[corollary]{Definition}
\newtheorem{example}[corollary]{Example}
\newtheorem{lemma}[corollary]{Lemma}
\newtheorem{proposition}[corollary]{Proposition}

\newtheorem{theorem}[corollary]{Theorem}
\numberwithin{equation}{section}

\allowdisplaybreaks[3]

\title[Explicit formulas for Casimir eigenvalues of Maass forms]{Explicit formulas for the Casimir eigenvalues of $SL(n,\mathbb{Z})$-Maass forms}
\author{Vishal Muthuvel}
\address{Department of Mathematics, Columbia University, New York, NY 10027}
\email{\href{mailto:vm2696@columbia.edu}{vm2696@columbia.edu}}
\date{\today}

\begin{document}
\begingroup

%
%

\begin{abstract}

Maass forms for $SL(n,\mathbb{Z})$ are defined to be eigenfunctions of the Casimir operators $\mathcal{D}_{m,n}$ of orders $1 \leq m \leq n$ for $GL(n,\mathbb{R})$. For any $1 \leq m \leq n$ and Maass form $\phi$ for $SL(n,\mathbb{Z})$, we provide a formula for the eigenvalue of $\mathcal{D}_{m,n}$ associated with $\phi$ in terms of the Langlands parameters of $\phi$. In the case $m=2$, we recover the formula for the Laplace eigenvalue of a Maass form due to Terras, the Casimir differential operator of order $2$ being the Laplacian. Our proof takes a graph-theoretic approach, relating the action of every elementary differential operator of order $m$ for $GL(n,\mathbb{R})$ to the partitions of a directed, edge-ordered, weighted graph with $m$ edges and at most $m$ vertices. 

\end{abstract}

\maketitle

\vspace*{-0.75cm}

\tableofcontents

\vspace*{-1.25cm}

%
%

\section{Introduction}

Let $n \geq 2$. Consider the Lie group $G = GL(n,\mathbb{R})$ with maximal compact subgroup $K = O(n,\mathbb{R})$ and center $Z(G) \cong \mathbb{R}^\times$. We study automorphic forms for $\Gamma_n = SL(n,\mathbb{Z})$. These are elements of 
\begin{equation*}
\mathcal{L}^2 \left( \Gamma_n \backslash G / (K \cdot Z(G)) \right),
\end{equation*}
i.e., functions on the generalized upper half-plane
\begin{equation*}
\mathfrak{h}^n \ := \ G / (K \cdot Z(G))
\end{equation*}
that are invariant by $\Gamma_n$ on the left and square-integrable under the $G$-invariant measure on $\mathfrak{h}^n$. In particular, these are joint eigenfunctions of the $G$-invariant differential operators acting on $\mathcal{L}^2 \left( \Gamma_n \backslash \mathfrak{h}^n \right)$. 

\sloppy Let $\mathfrak{g} = \mathfrak{gl}(n,\mathbb{R})$. This is the Lie algebra associated to $G$, consisting of all $n \times n$ matrices with coefficients in $\mathbb{R}$. Casimir operators for $G$ are generators for the center of the universal enveloping algebra of $\mathfrak{g}$
\begin{equation*}
Z(U(\mathfrak{g})),
\end{equation*}
which contains precisely the $G$-invariant differential operators acting on $\mathcal{L}^2 \left( \Gamma_n \backslash \mathfrak{h}^n \right)$ (see Section \ref{sec:lie-theory}). For each $1 \leq m \leq n$, there is a Casimir operator of order $m$ for $G$; the Casimir operator of order $2$ is notably the Laplacian. 

The actions of the Casimir and Hecke operators on $\mathcal{L}^2 \left( \Gamma_n \backslash \mathfrak{h}^n \right)$ commute and simultaneously diagonalize the space of automorphic forms for $\Gamma_n$. Selberg \cite{Sel56} obtained the decomposition of the joint spectrum of these operators when $n = 2$ using his famous trace formula. Arthur and Langlands \cite{Art79, Lan76} generalized Selberg's spectral decomposition to all $n$, showing that the joint spectrum decomposes into a continuous, a cuspidal, and a residual part. The continuous spectrum is uncountable and comes from Eisenstein series associated to the parabolic subgroups of $G$, the residual spectrum comes from the residues of these Eisenstein series, and the cuspidal spectrum is countable and comes from Maass forms for $\Gamma_n$. In this paper, we consider the actions of the Casimir operators on Maass forms for $\Gamma_n$. 

By the standard adelic lifting, Maass forms for $\Gamma_n$ correspond to cuspidal automorphic representations
\begin{equation*}
\pi \ = \ \bigotimes_v \pi_v
\end{equation*}
of $GL(n,\mathbb{Z}_\mathbb{Q})$, where the product is over places $v$ that are either finite primes or $\infty$. Each of these places is described by certain parameters. The infinite place, in particular, is described by a list of $n$ complex numbers 
\begin{equation}
\alpha = (\alpha_1, \alpha_2, \dots, \alpha_n) \in \mathbb{C}^n,
\end{equation}
called the Langlands parameters, which satisfy $\sum_{\ell=1}^n \alpha_\ell = 0$. The $\alpha_\ell$ familiarly appear in the Gamma factors of the completed $L$-function associated to $\pi$. 

We provide explicit formulas for the eigenvalues of Casimir operators associated with Maass forms for $\Gamma_n$, in terms of the Langlands parameters of the Maass forms. Our formulas can be readily generalized to minimal parabolic (Borel) Eisenstein series as well.

\begin{theorem}\label{thm:opening}
Let $1 \leq m \leq n$. Let $\mathcal{D}_{m,n}$ be the Casimir operator of order $m$ for $GL(n,\mathbb{R})$, and let $\phi$ be a Maass form for $SL(n,\mathbb{Z})$ with Langlands parameters $(\alpha_1, \dots, \alpha_n) \in \mathbb{C}^n$. The eigenvalue of the Casimir operator of order $m$ for $GL(n,\mathbb{R})$ associated with $\phi$ is given by
\begin{align}\label{eq:casimir-eigenvalues}
\mathcal{D}_{m,n} \phi \ = \ 
\begin{cases} 
0 & \text{if } m=1, \\
\left(\sum_{\ell=1}^n \alpha_\ell^2 - \frac{n^3-n}{12}\right)\phi & \text{if } m=2, \\
\left(\sum_{\ell=1}^n \alpha_\ell^3 - \frac{n}{2}\sum_{\ell=1}^n \alpha_\ell^2 + \frac{n^4-n^2}{24}\right)\phi & \text{if } m=3, \\
\left(\sum_{\ell=1}^n \alpha_\ell^4 - n \sum_{\ell=1}^n \alpha_\ell^3 + \frac{1}{2} \sum_{\ell=1}^n \alpha_\ell^2 - \frac{n^5-n}{80}\right)\phi & \text{if } m=4.
\end{cases}
\end{align}
\end{theorem}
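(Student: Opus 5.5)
The plan is to reduce the computation to a single explicit eigenfunction and then expand the Casimir operator combinatorially. Since $\mathcal{D}_{m,n}$ lies in $Z(U(\mathfrak{g}))$, its eigenvalue on $\phi$ depends only on the infinitesimal character of $\pi_\infty$. That character is in turn determined by the Langlands parameters $\alpha$. It is therefore enough to compute the eigenvalue of $\mathcal{D}_{m,n}$ on the power function $I_\nu(z)=\prod_{i,j} y_i^{b_{ij}\nu_j}$ on $\mathfrak{h}^n$, whose parameters $\nu$ are linked to $\alpha$ by the standard affine change of variables (the one relating the Whittaker function of $\phi$ to $I_\nu$). The eigenvalue of $I_\nu$ is a polynomial in $\nu$ that is symmetric in the $\alpha$'s, and it agrees with the eigenvalue on $\phi$. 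We take $\mathcal{D}_{m,n}=\sum_{i_1,\dots,i_m} E_{i_1i_2}E_{i_2i_3}\cdots E_{i_mi_1}$, with $E_{ij}$ acting as the Lie derivative $\frac{\partial}{\partial t}F(g\exp(tE_{ij}))|_{t=0}$.

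\textbf{Key step: the action of $E_{ij}$ on $I_\nu$.} First compute how each $E_{ij}$ acts on functions that are right $K\cdot Z(G)$-invariant and depend only on the diagonal Iwasawa coordinates, as $I_\nu$ does.
\begin{itemize}
\item Diagonal $E_{ii}$ acts on $I_\nu$ by a scalar $c_i(\nu)$; after the affine shift this becomes $\alpha_i$ plus a $\rho$-type constant linear in $i$.
\item Upper-triangular $E_{ij}$ ($i<j$) kill $I_\nu$ by left $N$-invariance.
\item Lower-triangular $E_{ji}$ are rewritten as $E_{ij}$ minus the skew element $E_{ij}-E_{ji}\in\mathfrak{k}$, which kills right-$K$-invariant functions once it has been commuted to the far right.
\end{itemize}
Each cyclic word $E_{i_1i_2}\cdots E_{i_mi_1}$ is then normal-ordered using $[E_{ij},E_{kl}]=\delta_{jk}E_{il}-\delta_{li}E_{kj}$. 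Every commutator shortens the word and closes up a path in the index set. Organizing these terms by the directed graph on vertices $\{i_1,\dots,i_m\}$ with edge sequence $i_1\to i_2\to\cdots\to i_1$ gives a sum over partitions of the edge set into cycles. Each surviving term is a product of diagonal factors $c_i$ times integer multiplicities. This should produce a closed form of Perelomov--Popov type: the eigenvalue equals $\sum_{k}(A^m)_{kk}$, where $A$ is upper triangular with diagonal entries $\alpha_k+\text{const}$ and constant off-diagonal entries coming from the commutators.

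\textbf{Specializing to $m\le 4$.} Expand $\operatorname{tr}(A^m)$ for $m=1,2,3,4$ as power sums in the shifted diagonal entries, plus counts of upper-triangular paths of each length. Use $\sum_\ell\alpha_\ell=0$ and the sums $\sum_k k^r$ to collect terms.
\begin{itemize}
\item For $m=1$ this gives $\sum_\ell \alpha_\ell$ plus a constant that must cancel, giving $0$.
\item For $m=2$ the constant $\frac{n^3-n}{12}$ should appear as $\sum_k(\text{shift}_k)^2$ combined with the path term. This recovers Terras's formula and serves as a consistency check on all the normalizations.
\item For $m=3,4$ the cross terms $\sum_k \alpha_k\cdot(\text{linear in }k)$ do not vanish individually. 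They have to be reorganized using symmetry in the $\alpha$'s: the eigenvalue is Weyl-invariant, so one can average over permutations, which replaces $\sum_k k\,\alpha_k^r$ by $\frac{n+1}{2}\sum_k\alpha_k^r$. This is how the coefficients $-\frac n2$, $-n$ and $\frac12$ should arise.
\end{itemize}

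\textbf{Main obstacle.} I expect the hardest part to be the normal-ordering bookkeeping: getting the exact integer coefficient attached to each graph partition, and fixing the precise affine relation between $\nu$ and $\alpha$ so that all constants match. I would first verify the whole scheme on $n=2,3$ by direct computation in the $(x,y)$-coordinates, and only then trust the general graph-sum formula for the $m=3,4$ constants.
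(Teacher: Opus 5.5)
Your route differs genuinely from the paper's. The paper never normal-orders in $U(\mathfrak{g})$. For each word it Iwasawa-decomposes $A=(I+t_mE_{i_m,i_1})\cdots(I+t_1E_{i_1,i_2})$ by Gram--Schmidt on $A^{-1}$, reads $A^{-1}$ as the path matrix of an edge-ordered graph to get the per-word cycle formula of Theorem~\ref{thm:elementary-differential-operator}, and then sums tables over all index tuples. A Harish-Chandra projection is a legitimate and much shorter substitute; you lose only the per-word information. As written, however, your plan has a wrong central formula and misplaces a convention issue.

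\textbf{The closed form is wrong.} For triangular $A$ one has $(A^m)_{kk}=A_{kk}^m$, so $\sum_k(A^m)_{kk}=\sum_kA_{kk}^m$. The off-diagonal entries never contribute, and there are no path counts to extract. It fails already at $m=2$: $\sum_k(\alpha_k+d_k)^2$ averages to $\sum_k\alpha_k^2+\sum_kd_k^2$, which can never equal $\sum_k\alpha_k^2-\frac{n^3-n}{12}$. Normal ordering actually produces a sum of \emph{all} entries:
\begin{itemize}
\item Take a lowest-weight vector $w$ of weight $\beta=\alpha+\rho$, so $E_{ii}w=\beta_iw$ and $E_{ik}w=0$ for $i>k$. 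A central element acts on $w$ by the same scalar as on $|\cdot|^{\beta}$.
\item Write $(E^m)_{ii}w=c^{(m)}_iw$, with $c^{(0)}=\mathbf{1}$. Use $(E^{m+1})_{ii}=\sum_kE_{ik}(E^m)_{ki}$, the vanishing $(E^m)_{ki}w=0$ for $k>i$ (by weights), and $[E_{ik},(E^m)_{ki}]=(E^m)_{ii}-(E^m)_{kk}$.
\item This gives $c^{(m+1)}_i=(\beta_i+i-1)c^{(m)}_i-\sum_{k<i}c^{(m)}_k$. Hence the eigenvalue of $\sum E_{i_1i_2}\cdots E_{i_mi_1}$ is $\mathbf{1}^{T}B^{m}\mathbf{1}$, where $B$ is lower triangular with $B_{ii}=\alpha_i+\frac{n-1}{2}$ and $B_{ik}=-1$ for $k<i$.
\end{itemize}
The row and column sums of $B$ are $\alpha_k+\rho_k$ and $\alpha_k-\rho_k$, which gives $m=2$ in one line. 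This polynomial is exactly Weyl-invariant, so your averaging trick is harmless on it; applied to a wrong intermediate expression it would only hide the error.

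Your three action rules are also valid only positionally: strictly upper-triangular elements leftmost, $\mathfrak{k}$ rightmost, $U(\mathfrak{a})$ in between. On $|\cdot|^{\beta}$, $E_{ii}$ is a scalar only at points $xy$; for instance $D_{E_{ii}}|\cdot|^{\beta}(k)=\sum_j\beta_jk_{ji}^2$ for $k\in K$. Working with $w$ avoids this.

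\textbf{The convention fix is misplaced.} Take your literal $\mathcal{D}_{m,n}=\sum E_{i_1i_2}\cdots E_{i_mi_1}$, where $D_X\circ D_Y$ corresponds to $XY$. Then the computation gives, for $m=3$, $\sum_\ell\alpha_\ell^3+\frac n2\sum_\ell\alpha_\ell^2-\frac{n^4-n^2}{24}$.
\begin{itemize}
\item For $n=2$ and $\alpha=(a,-a)$ this is $2a^2-\frac12$, the negative of the stated value.
\item Since $\sum_\ell\alpha_\ell^3=0$ there, no change in the dictionary between $\nu$ and $\alpha$ (including $\alpha\mapsto-\alpha$) can repair it.
\end{itemize}
The stated formulas are those of the reversed words $E_{i_mi_1}\cdots E_{i_1i_2}$. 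That is exactly what the paper differentiates, namely $|xy(I+t_mE_{i_m,i_1})\cdots(I+t_1E_{i_1,i_2})|^{\alpha}$. The eigenvalue of these reversed words at $\alpha$ is $(-1)^m$ times that of your words at $-\alpha$. So you must adopt the paper's order rather than adjust the affine change of variables. With that order, $(-1)^m\,\mathbf{1}^{T}B^{m}\mathbf{1}$ evaluated at $-\alpha$ recovers all four stated cases.
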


The Casimir operator of order $2$ for $G$ is the Laplacian $\Delta_n$ for $\mathfrak{h}^n$, in particular,
\begin{equation}
\mathcal{D}_{2,n} \ = \ \frac{1}{2} \Delta_n,
\end{equation}
and the formula for the Laplace eigenvalue of a Maass form was first given by Terras \cite{Ter88}. For Casimir operators of order $m \geq 3$, no such formulas have appeared in the literature before. In addition to providing these for $m \leq 4$ in \eqref{eq:casimir-eigenvalues}, we outline an algorithm that can be used to find such formulas for any $m$ in Theorem \ref{thm:elementary-differential-operator}. 

The Casimir operator of order $m$ for $G$ is defined as the sum of all elementary differential operators of order $m$, each corresponding to an $m$-tuple $(i_1,i_2,\dots,i_m) \in \{1, \dots, n\}^m$:
\begin{equation}\label{eq:casimir-operator}
\mathcal{D}_{m,n} \ := \ \sum_{(i_1,\dots,i_m) \in \{1, \dots, n\}^m} D_{i_1,i_2} \circ \cdots \circ D_{i_{m-1},i_m} \circ D_{i_m,i_1}.    
\end{equation}
(See Definition \ref{def:casimir-operator} for a complete description.) For arbitrary $(i_1,\dots,i_m) \in \{1, \dots, n\}^m$, we provide explicit formulas for the eigenvalues of $D_{i_1,i_2} \circ \cdots \circ D_{i_{m-1},i_m} \circ D_{i_m,i_1}$ associated with the power function (see Definition \ref{def:power-function}). To present them clearly, we require the following definitions.

Consider an ordered list of $k \geq 1$ integers: 
\begin{equation*}
T = (t_j)_{1 \leq j \leq k} \in \mathbb{Z}^k.    
\end{equation*}

\begin{definition}
We call 
\begin{align}
v_1(T) \ &:= \ \min\left(\{t_j\}_{1 \leq j \leq k}\right), \\
v_2(T) \ &:= \ 
\begin{cases}
\min\left(\{t_j\}_{1 \leq j \leq k} \backslash \{v_1(T)\}\right) & \text{if } \ \{t_j\}_{1 \leq j \leq k} \backslash \{v_1(T)\} \neq \emptyset \\
\infty & \text{if } \ \{t_j\}_{1 \leq j \leq k} \backslash \{v_1(T)\} = \emptyset 
\end{cases}
\end{align}
the first and second minimum elements of $T$, respectively. 
\end{definition}

\begin{definition}[sub-list]
We call $S$ a sub-list of $T$ and write $S \subset T$ if $S = (t_j)_{\ell \leq j \leq \ell'}$ for some $1 \leq \ell \leq \ell' \leq k$.
\end{definition}

\begin{definition}[cycle]\label{def:cycle}
We call $T$ a cycle on $c \in \mathbb{Z}$ if $t_1 = t_k =c$. Additionally, if $t_1 < t_\ell$ for all $2 \leq \ell \leq k-1$, then we call $T$ a proper cycle. 
\end{definition}

\begin{theorem}\label{thm:elementary-differential-operator}
Consider any $1 \leq m \leq n$, and any $(i_1, i_2, \dots, i_m) \in \{1, \dots, n\}^m$. Let $I = (i_1, \dots, i_m, i_1)$, and let $|\cdot|^\alpha$ be the power function for $GL(n,\mathbb{R})$ with Langlands parameters $\alpha = (\alpha_1,\dots,\alpha_{n}) \in \mathbb{C}^n$. Adopting the convention $\alpha_\infty = 0$, the eigenvalue of $D_{i_1,i_2} \circ \cdots \circ D_{i_{m-1},i_m} \circ D_{i_m,i_1}$ associated with $|\cdot|^\alpha$ is given by
\begin{align}\label{eq:elementary-eigenvalue}
\nonumber
& D_{i_1,i_2} \circ \cdots \circ D_{i_{m-1},i_m} \circ D_{i_m,i_1} |\cdot|^\alpha \\
& = \ |\cdot|^\alpha
\begin{cases}
0 \\
\text{if } \ i_1 > i_\ell \ \text{ for some } \ 1 \leq \ell \leq m, \\
\prod\limits_{\substack{S \subset I \text{ proper} \\ \text{cycle on } i_1}}\left(-\alpha_{v_1(S)}+\alpha_{v_2(S)}\right)\prod\limits_{\substack{S \subset I \text{ proper} \\ \text{cycle on } c > i_1}}\left(-\alpha_{v_1(S)}+\alpha_{v_2(S)} + 1\right) \\
\text{if } \ i_1 \leq i_\ell \ \text{ for all } \ 1 \leq \ell \leq m.
\end{cases} 
\end{align}
\end{theorem}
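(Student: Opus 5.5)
The idea is to compute the composite operator acting on the power function via an Iwasawa decomposition, working from the innermost operator outward. Write $g = u\cdot a\cdot k$ with $u$ upper triangular unipotent and $a$ diagonal. The power function $|g|^\alpha$ depends only on $a$, and is a product of powers of the diagonal entries $a_\ell$ with exponents built from $\alpha_\ell$ and the shift coming from $\rho$.

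Each $D_{i,j}$ is the differential operator attached to the matrix entry $E_{i,j}$, acting by right translation $\frac{\partial}{\partial t} F(g\exp(tE_{i,j}))|_{t=0}$, with the matrix-of-operators convention of Definition~\ref{def:casimir-operator}. The first step is to record how a single $D_{i,j}$ acts on functions of the form $u_{\cdot}$-polynomial times $|\cdot|^\alpha$. In the convention where $D_{i,j}$ uses entries of $g$ directly, this amounts to $D_{i,j} = \sum_r g_{r,i}\,\partial/\partial g_{r,j}$. Computing on $|\cdot|^\alpha$ alone gives three cases:
\begin{itemize}
\item for $i=j$, $D_{i,i}|\cdot|^\alpha$ is a constant multiple of $|\cdot|^\alpha$, with constant $\alpha_i$ plus a $\rho$-shift;
\item for $i<j$ (upper triangular), $D_{i,j}$ annihilates functions of the diagonal part;
\item for $i>j$, $D_{i,j}$ produces a new factor: a coordinate of $u$ (or of $g$) times a difference of exponents.
\end{itemize}

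The second step is an induction on $m$. Apply the rightmost operator $D_{i_m,i_1}$ first, then $D_{i_{m-1},i_m}$, and so on. At each stage the function is a sum of terms, each a monomial in the entries of $g$ times $|\cdot|^\alpha$. I would encode each monomial as a directed edge-ordered graph on the vertex set $\{i_1,\dots,i_m\}$, whose edges record which lowering operators have been applied and not yet cancelled. This is the graph-partition picture announced in the abstract.

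The key observation is that a later operator $D_{i,j}$ either differentiates an existing factor or acts on $|\cdot|^\alpha$. When a path of indices returns to an earlier value $c$, the variables created along the way contract, and this produces a scalar. For a proper cycle $S$ on $c$ (all interior indices larger than $c$), the net scalar is the difference of exponents between the smallest index $v_1(S)=c$ and the second smallest $v_2(S)$. That gives $-\alpha_{v_1(S)}+\alpha_{v_2(S)}$, plus a $+1$ arising from the commutator $[E_{a,b},E_{b,a}]$-type correction whenever the cycle is nested inside the outer one.

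For the vanishing case, suppose some $i_\ell<i_1$. The sequence must then descend below $i_1$ and come back. Order the evaluation so that the final application leaves a factor containing an upper-triangular derivative of a function of $a$ alone, which is zero. Equivalently, a matrix entry is left with nothing to contract against, and it vanishes at the evaluation point $g=a$ by left $u$-invariance.

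The nonvanishing case follows by decomposing $I$ into its proper sub-cycles. I would run an inner induction, stripping off the innermost proper cycle, i.e. one containing no further repeated value strictly inside it. Factoring out its scalar reduces $I$ to a shorter list with the cycle collapsed to a single occurrence of $c$. The outermost cycles on $i_1$ acquire no $+1$ because nothing encloses them. The convention $\alpha_\infty=0$ handles proper cycles of length two ($S=(c,c)$): there $v_2=\infty$, and the factor reduces to the diagonal eigenvalue.

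I expect the main obstacle to be the bookkeeping that shows cross terms cancel. Terms where a derivative hits a factor belonging to a different, non-nested cycle must either vanish or telescope, so that the final answer factors exactly as a product over proper sub-cycles. I would first verify this for $m=2,3$ by hand, matching Theorem~\ref{thm:opening}, to pin down the $\rho$-shifts and the sign convention. Then I would prove a commutation lemma: an operator $D_{i,j}$ with $i,j$ both larger than every index in a completed cycle commutes past that cycle's accumulated factor. This lemma is what makes the inductive collapse legitimate.
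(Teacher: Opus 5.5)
\textbf{Comparison with the paper's route.} Your route differs from the paper's. The paper never applies the operators one at a time. It differentiates $|xy\,A|^\alpha$ once, with $A=(I+t_mE_{i_m,i_1})\cdots(I+t_1E_{i_1,i_2})$. It reads the Iwasawa diagonal of $A$ off Gram--Schmidt on the columns of $A^{-1}$ (the path matrix of the edge-ordered graph). It then expands the coefficient of $t_1\cdots t_m$ into signed sums over partitions of the edges into paths.

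\textbf{Gap 1: the single-operator rule fails.} Your sequential route breaks at its first step, because the three-case rule for a single $D_{i,j}$ is false as a statement about functions on $G$. Since $|\cdot|^\alpha$ is right $O(n)$-invariant and $E_{i,j}-E_{j,i}\in\mathfrak{so}(n)$, one has $D_{i,j}|\cdot|^\alpha=D_{j,i}|\cdot|^\alpha$ identically, so raising and lowering operators act \emph{identically}. Both vanish at the points $xy$ of the Borel $NA$ but not elsewhere: for $n=2$, $D_{1,2}|g|^\alpha=(\alpha_2-\alpha_1)\,g_{21}g_{22}\,(g_{21}^2+g_{22}^2)^{-1}|g|^\alpha$. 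Likewise $D_{c,c}|\cdot|^\alpha=\alpha_c|\cdot|^\alpha$ holds only on $NA$, since $D_{c,c}$ is not central. Each earlier translation moves the point off $NA$, so there is no ``upper kills, lower creates a factor'' rule to induct on.

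\textbf{Gap 2: the order of application.} The formula is about $D_{i_1,i_2}$ acting \emph{first}, which is what the paper's $A$ encodes. If $D_{i_m,i_1}$ acts first, as you propose, then for $n=2$, $(i_1,i_2)=(1,2)$ you get $\partial_s\partial_t|y(I+sE_{1,2})(I+tE_{2,1})|^\alpha=0$. This is because the bottom row $(Y_2t,Y_2)$ and the determinant do not involve $s$; the formula predicts $\alpha_2-\alpha_1$. A sound operator-theoretic version works in $U(\mathfrak{g})$ with the word $u=E_{i_m,i_1}\cdots E_{i_1,i_2}$ and the functional $\chi(u)$ = value of $u\,|\cdot|^\alpha$ at the identity. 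This functional kills $\mathfrak{n}\,U(\mathfrak{g})$ ($\mathfrak{n}$ strictly upper triangular) and $U(\mathfrak{g})\,\mathfrak{so}(n)$, and satisfies $\chi(E_{c,c}u)=\alpha_c\chi(u)$.

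\textbf{Gap 3: the $(c,c)$ check, and a sign error in the statement.} Your check for $S=(c,c)$ is wrong. With $\alpha_\infty=0$ the factor is $-\alpha_c$ (or $1-\alpha_c$ if $c>i_1$), which is the negative of the truth.
\begin{itemize}
\item For $m=1$, $D_{i,i}|\cdot|^\alpha=\alpha_i|\cdot|^\alpha$; in the paper's own framework, $\langle\mathbf{b}_i,\mathbf{b}_i\rangle^{-\alpha_i/2}=(1+t_1)^{\alpha_i}$. The formula gives $-\alpha_i$.
\item For $(i_1,i_2,i_3)=(1,2,2)$, the word $E_{2,1}E_{2,2}E_{1,2}$ gives $(\alpha_2-\alpha_1)(\alpha_2-1)$, not $(\alpha_2-\alpha_1)(1-\alpha_2)$.
\item The paper's own $m=3$ table lists $+(\alpha_{i_1}+\rho_{i_1})^3$ for $i_1=i_2=i_3$, while the formula gives $-(\alpha_{i_1}+\rho_{i_1})^3$.
\end{itemize}
In general, move $E_{c,c}$ to the left end of the word, past the operators forming the path from $c$ back to $i_1$. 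This yields the factor $\alpha_c-1$ if $c>i_1$ and $\alpha_c$ if $c=i_1$. So only a sign-corrected statement can be proved.

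\textbf{Gap 4: the core step is missing.} The substance of the theorem is the cancellation of cross terms and the factorization over proper cycles, with $+1$ on those on $c>i_1$. You leave this to ``bookkeeping.'' A proper-cycle word is not central, so it has no scalar to factor out in isolation. The $+1$ is produced by commuting the enclosing operators past it. For example, in the word $E_{2,1}E_{3,2}E_{2,3}E_{1,2}$ of $(i_1,\dots,i_4)=(1,2,3,2)$, moving $E_{1,2}$ left through $E_{2,3}$ creates $-E_{1,3}$, and that term contributes exactly $\alpha_2-\alpha_1$.

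Your commutation lemma also concerns the wrong operators. A nested cycle is flanked by operators with smaller or intermediate indices, not larger ones. For $(i_1,\dots,i_5)=(1,3,2,4,2)$, the cycle $(2,4,2)$ sits between $E_{3,2}$ and $E_{2,1}$. The paper hits the same bottleneck: it sets up the partition sums $\mathcal{P}(v,w,S)$ and begins a ``pivot point'' cancellation, but does not finish it.
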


To demonstrate this theorem in practice, we present an example. 

\begin{example}
Let $m=n=10$, and 
\begin{equation*}
(i_1,i_2,\dots,i_{10}) \ = \ (1,9,2,5,5,9,6,8,4,5).    
\end{equation*}
To compute the eigenvalue of $D_{i_1,i_2} \circ \cdots \circ D_{i_{10},i_1}$, we identify all sub-lists of $I = (i_1,\dots,i_{10},i_1)$ that are proper cycles. To this end, we note that every sub-list of $I$ that is a cycle arises from a repeated element of $I$:
\begin{equation*}
i_1 = i_{11} = 1, \ i_2 = i_6 = 9, \ i_4 = i_5 = 5, \ i_5 = i_{10} = 5.   
\end{equation*}
We record the cycles corresponding to these repeated elements in the following table. 
\begin{table}[hbt!]
    \centering
    \begin{tabular}{|c|c|c|c|}
    \hline
        Cycle $(S)$ & Proper? & $v_1(S)$ & $v_2(S)$ \\
        \hline
        $(i_1,i_2\dots,i_{10},i_1)=(1,9,2,5,5,9,6,8,4,5,1)$ & Yes & $i_1 = 1$ & $i_3 = 2$ \\
        $(i_2,i_3\dots,i_6) = (9,2,5,5,9)$ & No & $i_3 = 2$ & $i_4 = 5$ \\
        $(i_4,i_5)=(5,5)$ & Yes & $i_4 = 5$ & $\infty$ \\
        $(i_5,i_6,\dots,i_{10}) =  (5,9,6,8,4,5)$ & No & $i_9 = 4$ & $i_5 = 5$ \\
        \hline
    \end{tabular}
    \vspace{2mm}
    \caption{Cycles $S \subset (i_1,\dots,i_{10},i_1) = (1,9,2,5,5,9,6,8,4,5,1)$}
    \label{tab:example-cycles}
\end{table}

Therefore, the eigenvalue of $D_{i_1,i_2} \circ \cdots \circ D_{i_9,i_{10}} \circ D_{i_{10},i_1}$ associated with $|\cdot|^\alpha$ is
\begin{align*}
\left(-\alpha_{i_1}+\alpha_{i_3}\right)\left(-\alpha_{i_4}+\alpha_\infty+1\right) \ &= \ \left(-\alpha_1+\alpha_2\right)\left(-\alpha_5+1\right) \\
&= \ \alpha_1\alpha_5-\alpha_2\alpha_5-\alpha_1+\alpha_2.
\end{align*}
\end{example}

Given this algorithm for calculating the eigenvalues of the elementary differential operators of order $m$, we may execute the sum \eqref{eq:casimir-operator} over all $(i_1,\dots,i_m)\in\{1,\dots,n\}^m$ to find the eigenvalue of the Casimir operator of order $m$ for $G$. Carrying this out for $m=2$, $3$, $4$ in Section \ref{sec:final}, we obtain \eqref{eq:casimir-eigenvalues}. One can also obtain formulas for other $m$ in this way.

Every $g \in \mathfrak{h}^n$ can be uniquely written as $g = xy$ where
\begin{equation}\label{eq:upper-half-plane2}
x \ = \ 
\left(\begin{smallmatrix}
  1 & x_{1,2} & x_{1,3} & \cdots & x_{1,n} \\
  & 1 & x_{2,3} & \cdots & x_{2,n} \\
  & &  \ddots & \ddots & \vdots \\
  & & & 1 & x_{n-1,n} \\
  & & & & 1
\end{smallmatrix}\right), \quad y \ = \ \left(\begin{smallmatrix}
y_1 y_2 \cdots y_{n-1} & & & & \\
& y_1 y_2 \cdots y_{n-2} & & & \\
& & \ddots & & \\
& & & y_1 & \\
& & & & 1
\end{smallmatrix}\right),
\end{equation}
$x_{i,j} \in \mathbb{R}$ for all $1 \leq i < j \leq n$, and $y_i > 0$ for all $1 \leq i \leq n-1$ (see Lemma \ref{lemma:Iwasawa}). 

To our knowledge, \eqref{eq:elementary-eigenvalue} is the first known explicit formula for the eigenvalues of differential operators of arbitrary order on $\mathcal{L}^2(\Gamma_n \backslash \mathfrak{h}^n)$. A characterization of this type has eluded us for a long time because the action of $\Gamma_n$ on $\mathfrak{h}^n$, as simple as it is to define, is difficult to compute in practice. To calculate the action of any $\gamma \in \Gamma_n$ on $xy \in \mathfrak{h}^n$, we need to determine the upper-triangular matrix in the Iwasawa decomposition of $\gamma xy$ to identify it with a point in $\mathfrak{h}^n$ as in \eqref{eq:upper-half-plane2}. The best-known algorithms for this are $O(n^3)$ (see, for example, our constructive proof of Lemma \ref{lemma:Iwasawa}, or the algorithm outlined in \cite[Proposition 1.2.6]{Gol06}). This is the computational complexity for a single $xy \in \mathfrak{h}^n$. Carrying this out for all $xy \in \mathfrak{h}^n$ is significantly more complex, representing the magnitude of difficulty involved in finding a general explicit formula (i.e., a $O(1)$ algorithm) for this or, for that matter, any action on $\mathfrak{h}^n$ (including that of a differential operator). To exemplify this, we list $\mathcal{D}_{3,3}$ below (as computed by Bump \cite{Bum84}): 
\begin{align}\label{eq:Casimir-order-3}
\nonumber
\mathcal{D}_{3,3} \ = \ &-y_1^2y_2\frac{\partial^3}{\partial y_1^2 \partial y_2} + y_1y_2^2\frac{\partial^3}{\partial y_1 \partial y_2^2} + y_1y_2^2\frac{\partial^3}{\partial x_{1,2}^2 \partial y_1} \\
\nonumber
&- y_1^3y_2^2\frac{\partial^3}{\partial x_{1,3}^2 \partial y_1}  -2y_1^2y_2x_{1,2}\frac{\partial^3}{\partial x_{2,3} \partial x_{1,3} \partial y_2} \\
\nonumber
&+ (-x_{1,2}^2 + y_2^2)y_1^2y_2\frac{\partial^3}{\partial x_{1,3}^2 \partial y_2} -y_1^2y_2\frac{\partial^3}{\partial x_{2,3}^2 \partial y_2} \\
\nonumber
&+ 2y_1^2y_2^2\frac{\partial^3}{\partial x_{2,3} \partial x_{1,2}\partial x_{1,3}} + 2y_1^2y_2^2x_{1,2}\frac{\partial^3}{\partial x_{1,2} \partial x_{1,3}^2} \\
\nonumber
&+y_1^2\frac{\partial^2}{\partial y_1^2} -y_2^2\frac{\partial^2}{\partial y_2^2} + (x_{1,2}^2+y_2^2)y_1^2\frac{\partial^2}{\partial x_{1,3}^2} \\
& + 2y_1^2x_{1,2}\frac{\partial^2}{\partial x_{2,3}\partial x_{1,3}} + y_1^2 \frac{\partial^2}{\partial x_{2,3}^2} - y_2^2\frac{\partial^2}{\partial x_{1,2}^2}.
\end{align}
In fact, in 2005, Broughan \cite[Appendix:\@ The GL(n)pack Manual]{Gol06} created a Mathematica package to derive the Casimir operator $\mathcal{D}_{m,n}$ of order $m$ and rank $n$ in Iwasawa variables. However, as the order $m$ and rank $n$ increase, the efficiency of the program diminishes notably. For reference, the number of terms in $\mathcal{D}_{3,4}$ and $\mathcal{D}_{4,4}$, as computed by the package, are $70$ and $294$, respectively. In this regard, Broughan \cite{Bro07} also confirms that ``it is difficult to determine any pattern in the degrees of the coefficients [of these operators]," raising the need for a different approach. 

Since the Casimir operators for $\mathfrak{h}^n$ have been so difficult to derive with current numerical methods, their eigenvalues have proved even harder to compute. While the Harish-Chandra isomorphism \cite{Har51} identifies the Casimir operators for $GL(n,\mathbb{R})$ with their eigenvalue polynomials in spectral (Langlands) variables, it does so only implicitly. We cannot apply this isomorphism to determine explicit formulas for the eigenvalues of these operators. Our methods in this paper, by contrast, provide explicit formulas for these eigenvalues. 

To summarize our approach, in Section \ref{sec:lie-theory}, we equate the problem of calculating the eigenvalue of $D_{i_1,i_2} \circ \cdots \circ D_{i_m,i_1}$ to calculating the diagonal matrix in the Iwasawa decomposition of \begin{equation}\label{eq:elementary-matrix}
A \ := \ \prod_{j=1}^m(I+t_jE_{i_j,i_{j+1}})^{-1} \ = \ (I+t_1E_{i_1,i_2})^{-1} \cdots (I+t_1E_{i_m,i_1})^{-1}.
\end{equation}
As explained before, there are numerous computational challenges associated with finding the Iwasawa decomposition of an $n \times n$ matrix. In response to them, in Section \ref{sec:graph-theory}, we consider the graph $G = G_{(i_1,\dots,i_m)}$ on $\leq m$ vertices $\{i_1, \dots, i_m\}$ with $m$ edges $\{e_1, \dots, e_m\}$, where $e_j=[i_j,i_{j+1})$ is a directed edge from $i_j$ to $i_{j+1}$ with weight
\begin{equation}
w_j \ = \ 
\begin{cases}
-t_j & \text{if } i_j \neq i_{j+1}, \\
-\frac{t_j}{(1+t_j)} & \text{if } i_j = i_{j+1},
\end{cases}
\end{equation}
for all $1 \leq j \leq m$.\footnote{We clarify that $G_{(i_1,\dots,i_m)}$ could have less than $m$ vertices (for instance, if $i_j = i_{j'}$ for some $j \neq j'$). However, $G_{(i_1,\dots,i_m)}$ has exactly $m$ edges; even if $e_j$ and $e_{j'}$ start and end at the same points, i.e., $i_j = i_{j'}$ and $i_{j+1} = i_{j'+1}$, we treat $e_j$ and $e_{j'}$ as different edges. In other words, the graph is allowed to have multi-edges.} The edges of $G$ are also ordered: if $j\leq k$, then we cannot traverse $e_j$ if we previously traversed $e_k$; that is, paths on $G$ necessarily traverse their edges in increasing order. In this setting, we recognize $A$ as the path matrix of $G$: the entry $A_{v,w}$ in position $(v,w)$ of $A$ is the sum of (the weights of) all the paths from $v$ to $w$ on $G$. This perspective guides our analysis of the recursive Iwasawa algorithm, as we relate the derivatives of the recursive Iwasawa variables \eqref{eq:recursive-variables} to partitions of the edge set of $G$. This graph-theoretic interpretation also underpins the combinatorial intuition necessary to demonstrate absolute cancellation, giving us an exact, rather than an asymptotic, formula in the theorem above. 

Explicit formulas of this type, by specifying relations on Langlands parameters, offer a fresh perspective on important spectral questions. For example, Miller \cite[Theorem 1.2.2]{Mil97} showed that the cuspidal eigenvalues of the Laplacian are strictly greater than $\frac{n^3 -n}{24}$, which in tandem with \eqref{eq:casimir-eigenvalues} provided strong evidence for the archimedean Ramanujan-Selberg conjecture (that Langlands parameters are always purely imaginary). It would be interesting to investigate the spectral relations suggested by our higher order explicit formulas as well. 

\addtocontents{toc}{\protect\setcounter{tocdepth}{1}}

%
%

\subsection*{Acknowledgments}

The idea for this work arose during a lecture by Dorian Goldfeld in the Spring 2025 offering of MATH 8675 at Columbia University. I am deeply grateful to Dorian Goldfeld for his continued guidance as the advisor of my work. The graph-theoretic approach in Section \ref{sec:graph-theory} is inspired in large part by the methods taught by Ivan Corwin in the Spring 2025 offering of MATH 6153 at Columbia University. I thank Ivan Corwin for many helpful discussions in this respect and for his comments on earlier drafts of this paper. This research was partially supported by the Science Research Fellows Program at Columbia University.

\addtocontents{toc}{\protect\setcounter{tocdepth}{1}}

%
%

\section{Lie theory}\label{sec:lie-theory}

\subsection{Background}

\begin{lemma}[Iwasawa decomposition of $GL(n,\mathbb{R})$]\label{lemma:Iwasawa}
The Iwasawa decomposition of $GL(n,\mathbb{R})$ is
$U(n,\mathbb{R}) \cdot D(n,\mathbb{R}) \cdot O(n,\mathbb{R})$, i.e., every $g \in GL(n,\mathbb{R})$ can be written as $g = n d k$, where $n \in U(n,\mathbb{R})$ is uniquely determined, and $d \in D(n,\mathbb{R})$ and $k \in O(n,\mathbb{R})$ are uniquely determined upto $\pm I$.
\end{lemma}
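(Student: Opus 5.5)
The plan is to build the decomposition row by row from the bottom of $g$, using Gram--Schmidt on the rows. That direction fits the lower-right normalization of $y$ in \eqref{eq:upper-half-plane2}, and it is constructive in the $O(n^3)$ sense mentioned in the introduction. We need $g = ndk$ with $n$ upper triangular unipotent, $d$ diagonal, and $k$ orthogonal. Writing $b = nd$, which is upper triangular with nonzero diagonal, the equation $g = bk$ is equivalent to $g g^T = b b^T$. So the first step is existence of a factorization of the positive definite symmetric matrix $P = gg^T$ as $b b^T$ with $b$ upper triangular. This is a reversed Cholesky factorization. Let $w$ be the antidiagonal permutation matrix. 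Then $wPw$ is positive definite, so it has an ordinary Cholesky factorization $wPw = L L^T$ with $L$ lower triangular. Setting $b = wLw$, which is upper triangular, gives $P = bb^T$.

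Alternatively, and more in the constructive spirit, we proceed directly. Let $r_1,\dots,r_n$ be the rows of $g$. These are linearly independent, so we may apply Gram--Schmidt in the order $r_n, r_{n-1}, \dots, r_1$. This produces orthonormal vectors $k_n, \dots, k_1$ such that each $r_i$ lies in the span of $k_i, \dots, k_n$ and has a nonzero $k_i$-coefficient. Let $k$ be the orthogonal matrix with rows $k_i$. Then $r_i = \sum_{j \ge i} b_{ij} k_j$ with $b_{ii} \neq 0$, which says exactly that $g = bk$ with $b$ upper triangular. Now factor $b = nd$ with $d = \mathrm{diag}(b_{11},\dots,b_{nn})$ and $n = b d^{-1}$. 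Then $n$ is upper triangular with ones on the diagonal, which gives existence.

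For uniqueness, suppose $n d k = n' d' k'$. Then
\[
(n'd')^{-1} n d = k' k^{-1}.
\]
The left-hand side is upper triangular and the right-hand side is orthogonal. An upper triangular orthogonal matrix $u$ satisfies $u^{-1} = u^T$, so $u^{-1}$ is simultaneously upper triangular and lower triangular, hence diagonal. A diagonal orthogonal matrix has entries $\pm 1$. So
\[
d'^{-1} n'^{-1} n d = \epsilon
\]
for some diagonal sign matrix $\epsilon$, that is, $n'^{-1} n = d' \epsilon d^{-1}$. The left side is unipotent and the right side is diagonal, so both equal $I$. Hence $n = n'$ and $d' = d\epsilon^{-1}$, and then $k' = \epsilon k$.

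The only delicate point is the phrase "up to $\pm I$." The argument shows that $d$ and $k$ are determined up to a diagonal sign matrix $\epsilon$, with $(d,k) \mapsto (d\epsilon, \epsilon k)$. Two readings of the statement make this consistent. One is to interpret "$\pm I$" as any diagonal $\pm 1$ matrix. The other is to normalize by requiring the diagonal entries of $d$ to be positive, in which case $d$ and $k$ become unique. This matches the choice $y_i > 0$ in \eqref{eq:upper-half-plane2}. I would state this normalization explicitly in the proof. Apart from that, I expect no real obstacle.
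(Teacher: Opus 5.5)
Your proposal is correct and is essentially the paper's Gram--Schmidt argument. The paper orthogonalizes the columns $\mathbf{g}_j$ of $g^{-1}$ to get $g^{-1}=qr$ and then inverts to $g=r^{-1}q^{-1}$. You orthogonalize the rows of $g$ in the order $r_n,\dots,r_1$, which avoids the inversion; since $r_i\perp\mathbf{g}_j$ for $i\neq j$, this yields the same orthogonal factor up to signs.

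Your uniqueness argument, which the paper only cites from Goldfeld, is also correct. You are right that it shows $d$ and $k$ are determined only up to $(d,k)\mapsto(d\epsilon,\epsilon k)$ with $\epsilon$ a diagonal sign matrix. So the statement's ``up to $\pm I$'' must be read that way, or $d$ must be normalized to have positive entries.
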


We provide a constructive proof of the existence of the Iwasawa decomposition as we use it in the proof of Theorem \ref{thm:elementary-differential-operator}. The proof of uniqueness is straightforward (refer to \cite[Proposition 1.2.6]{Gol06}).

\begin{proof}
To find the Iwasawa decomposition of any matrix $g \in GL(n,\mathbb{R})$, we can apply the Gram-Schmidt algorithm to the column vectors constituting 
\begin{equation}
g^{-1} \ = \
\begin{pmatrix}
    \mathbf{g}_1 & \mathbf{g}_2 & \cdots & \mathbf{g}_n
\end{pmatrix}.
\end{equation}
This process yields a list of orthogonal vectors $(\mathbf{h}_1,\ldots,\mathbf{h}_n)$, where 
\begin{equation}\label{eq:gram_schmidt_general}
    \mathbf{h}_v \ = \ \mathbf{g}_v - \sum_{k=1}^{v-1}\frac{\langle \mathbf{g}_v, \mathbf{h}_k \rangle}{\langle \mathbf{h}_k, \mathbf{h}_k\rangle}\mathbf{h}_k, \quad v=1,\dots,n,
\end{equation}
which in turn provides the components of the $QR$ decomposition of $g^{-1}$:
\begin{align} \label{eq:iwasawa-orthogonal}
q \ &= \
\begin{pmatrix}
    \frac{\mathbf{h}_1}{\langle \mathbf{h}_1, \mathbf{h}_1 \rangle^{1/2}} & \frac{\mathbf{h}_2}{\langle \mathbf{h}_2, \mathbf{h}_2 \rangle^{1/2}} & \cdots & \frac{\mathbf{h}_n}{\langle \mathbf{h}_n, \mathbf{h}_n \rangle^{1/2}}
\end{pmatrix}, \\
\label{eq:iwasawa-upper-triangular}
r \ &= \ 
\begin{pmatrix}
    \langle \mathbf{h}_1, \mathbf{h}_1 \rangle^{1/2} & \frac{\langle \mathbf{g}_2, \mathbf{h}_1 \rangle}{\langle \mathbf{h}_1, \mathbf{h}_1 \rangle^{1/2}} & \cdots & \frac{\langle \mathbf{g}_n, \mathbf{h}_1 \rangle}{\langle \mathbf{h}_1, \mathbf{h}_1 \rangle^{1/2}} \\
    & \langle \mathbf{h}_2, \mathbf{h}_2 \rangle^{1/2} & \cdots & \frac{\langle \mathbf{g}_n, \mathbf{h}_2 \rangle}{\langle \mathbf{h}_2, \mathbf{h}_2 \rangle^{1/2}} \\
    & & \ddots & \vdots \\
    & & & \langle \mathbf{h}_n, \mathbf{h}_n \rangle^{1/2}
\end{pmatrix}.
\end{align}
Taking the inverse of the $QR$ decomposition of $g^{-1}$ provides the Iwasawa decomposition of 
\begin{equation}
g \ = \ (g^{-1})^{-1} \ = \ (qr)^{-1} \ = \ r^{-1}q^{-1},
\end{equation}
in that $r^{-1}$ is upper-triangular, and $q^{-1}$ is orthogonal. 
\end{proof}

As recognized by Gelfand, Graev, and Piatetski-Shapiro \cite{GGP69}, the Iwasawa decomposition gives a concrete representation of the generalized upper half-plane associated to $G$.  
\begin{definition}\label{def:upper-half-plane}
The generalized upper half-plane $\mathfrak{h}^n$ associated to $G$ is identified with the following coset space:
\begin{align*}
& G / (K \times Z(G)) \\
&= \ \left\{
\left. \left(
\begin{smallmatrix}
1 & x_{1,2} & \cdots & x_{1,k} \\
& 1 & \cdots & x_{2,k} \\
& & \ddots & \vdots & \\
& & & 1
\end{smallmatrix}
\right) 
\left(
\begin{smallmatrix}
Y_1 & & & & \\
& Y_2 & & & \\
& & \ddots & & \\
& & & yY_{n-1} & \\
& & & & 1
\end{smallmatrix}
\right) \ \right| \ 
\substack{x_{i,j} \in \mathbb{R}, \ Y_i > 0 \\ \\ \text{ for all } 1 \leq i < j \leq n}
\right\}.
\end{align*}
\end{definition}

\begin{definition}\label{def:differential-operator}
Given any $\alpha \in \mathfrak{gl}(n,\mathbb{R})$, the differential operator $D_\alpha$ is defined for all smooth functions $F:G \to \mathbb{C}$ and $g \in G$ by 
$$D_\alpha F(g) \ := \ \left.\frac{\partial}{\partial t}F(g \cdot (I+t\alpha))\right|_{t=0},$$ where $I \in G$ is the identity matrix.   
\end{definition}
The set $$\left\{\left.D_\alpha\in U(\mathfrak{g})\right|\alpha\in\mathfrak{g}\right\}$$ generates the universal enveloping algebra $U(\mathfrak{g})$ as an associative algebra over $\mathbb{R}$. The center $Z(U(\mathfrak{g}))$ of this algebra is highly relevant to the theory of automorphic forms, in that it corresponds to the algebra of differential operators that are invariant under (commute with) the group action of $\Gamma_n$. To this end, Capelli \cite{Cap87} proved that $Z(U(\mathfrak{g}))$ is generated by the Casimir operators of order $m$, where $1 \leq m \leq n$.

\begin{definition}\label{def:casimir-operator}
Let $m$ be an integer satisfying $1 \leq m \leq n$. The Casimir operator of order $m$ for $G$ is
\begin{equation}
\mathcal{D}_{m,n} \ := \ \sum_{(i_1,\dots,i_\ell) \in [k]^{\ell}}D_{i_1,i_2} \circ \cdots \circ D_{i_{\ell-1},i_\ell} \circ D_{i_\ell,i_1},
\end{equation}
where $E_{i,j}$ is the $k\times k$ matrix with entry $(i,j)$ as $1$ and zeros everywhere else, and $D_{i,j}$ is our shorthand for the differential operator $D_{E_{i,j}}$ (in the sense of Definition \ref{def:differential-operator}). 
\end{definition}

As stated earlier, the motivation for this Lie theoretic discussion is the spectral theoretic principle that the eigenfunctions of the invariant differential operators in $Z(U(\mathfrak{g}))$ generate the space of automorphic functions on $\Gamma_n \backslash \mathfrak{h}^n$. In this spirit, we define the quintessential simultaneous eigenfunction of these invariant differential operators: the power function for the Borel subgroup of $G$. 

\begin{definition}\label{def:power-function}
Let $\alpha=(\alpha_i)_{1 \leq i \leq n} \in \mathbb{C}^n$ satisfying $\sum_{i=1}^k \alpha_i=0$. The power function $|\cdot|^{\alpha}:\mathfrak{h}^n\to\mathbb{C}$ for the Borel subgroup of $GL(k,\mathbb{R})$ with Langlands parameters $\alpha$ is defined for all $g = xy \in \mathfrak{h}^k$ by 
\begin{align}
\label{eq:power-function}
|xy|^\alpha \ = \ \left|\left(\begin{smallmatrix}
    1 & \cdots & x_{1,n-1} & x_{1,n} \\
    & \ddots & \vdots & \vdots \\
    \\
    & & 1 & x_{n-1,n} \\
    & & & 1 \\
\end{smallmatrix}\right)
\left(\begin{smallmatrix}
    Y_1 & & & \\
    & \ddots & & \\
    & & Y_{n-1} & \\
    & & & Y_n
\end{smallmatrix}\right)\right|^\alpha \ := \ \prod_{i=1}^n Y_i^{\alpha_i}.
\end{align}
\end{definition}

The phase shift $\rho^{(k)} = (\frac{k+1}{2} - i)_{1 \leq i \leq k}$ is commonly associated with the power function in keeping with the convention of the theory of root systems.

\subsection{Setup} 
Fix $(i_1, \dots, i_m) \in \{1, \dots, n\}^m$. Let $i_{m+1} := i_1$,
\begin{align}
\mathbf{0} \ := \ (0, \dots, 0), \ \ \mathbf{t} \ := \ (t_1, \dots, t_m) \in \mathbb{R}^m,
\end{align}
and 
\begin{align}
A \ := \ (I+t_mE_{i_m,i_1}) \cdots (I+t_1E_{i_1,i_2}) \in \mathbb{R}^{n \times n}.
\end{align}
For all $1 \leq v, w \leq n$, let $\mathbf{a}_v : \mathbb{R}^m \to \mathbb{R}^n$ be the $v$\textsuperscript{th} column of $A^{-1}$, and let $a_{v,w} : \mathbb{R}^m \to \mathbb{R}$ be the $(v,w)$\textsuperscript{th} entry of $A^{-1}$:
\begin{align}
\nonumber
A^{-1} \ &= \ \ (I+t_1E_{i_1,i_2})^{-1}\cdots(I+t_mE_{i_m,i_1})^{-1} \\
\nonumber
& = \ 
\begin{pmatrix}
\mathbf{a}_1(\mathbf{t}) & \cdots & \mathbf{a}_n(\mathbf{t})
\end{pmatrix} \\
\label{eq:matrixA}
& = \
\begin{pmatrix}
a_{1,1}(\mathbf{t}) & \cdots & a_{1,n}(\mathbf{t}) \\
\vdots & \ddots & \vdots \\
a_{n,1}(\mathbf{t}) & \cdots & a_{n,n}(\mathbf{t})
\end{pmatrix}.
\end{align} 
We write $(\mathbf{b}_1(\mathbf{t}),\dots,\mathbf{b}_n(\mathbf{t}))$ for the list of orthogonal vectors obtained by applying the Gram-Schmidt process to $(\mathbf{a}_1(\mathbf{t}),\dots,\mathbf{a}_n(\mathbf{t}))$.

Next, let 
\begin{align}
\ell \ := \ | \{i_1, \dots, i_m\} | \ \leq \ m.
\end{align}
Define the relative ordering on $i_1, \dots, i_m$ by
\begin{align}
\nonumber
\rho: \{1, \dots, m\} \ & \to \ \{1, \dots, \ell\}, \\
j \ & \mapsto \ \left|\{i_{j'} \mid j' \in \{1, \dots, m\}, \ i_{j'} \leq i_j\}\right|;
\end{align}
and an inverse mapping by
\begin{align}
\nonumber
\sigma: \{1, \dots, \ell\} \ & \to \ \{1, \dots, m\}, \\
j \ & \mapsto \ \min\left(\rho^{-1}(\{j\})\right).
\end{align}
Let $A'$ be the $\ell \times \ell$ submatrix of $A$ consisting of rows and columns $i_1, \dots, i_m$:
\begin{align}
A' \ := \ (I+t_mE_{\rho(m),\rho(1)}) \cdots (I+t_1E_{\rho(1),\rho(2)}).
\end{align}
For all $1 \leq v, w \leq \ell$, let $\mathbf{a}_v' : \mathbb{R}^m \to \mathbb{R}^\ell$ be the $v$\textsuperscript{th} column of $(A')^{-1}$, and let $a_{v,w}' : \mathbb{R}^m \to \mathbb{R}$ be the $(v,w)$\textsuperscript{th} entry of $(A')^{-1}$:
\begin{align}
\nonumber
(A')^{-1} \ &= \ \ (I+t_1E_{\rho(1),\rho(2)})^{-1} \cdots (I+t_mE_{\rho(m),\rho(1)})^{-1} \\
\nonumber
& = \ 
\begin{pmatrix}
\mathbf{a}_1'(\mathbf{t}) & \cdots & \mathbf{a}_\ell'(\mathbf{t})
\end{pmatrix} \\
\label{eq:matrixA'}
& = 
\begin{pmatrix}
a_{1,1}'(\mathbf{t}) & \cdots & a_{1,\ell}'(\mathbf{t}) \\
\vdots & \ddots & \vdots \\
a_{\ell,1}'(\mathbf{t}) & \cdots & a_{\ell,\ell}'(\mathbf{t})
\end{pmatrix},
\end{align} 
We write $(\mathbf{b}_1'(\mathbf{t}),\dots,\mathbf{b}_n'(\mathbf{t}))$ for the list of orthogonal vectors resulting from applying the Gram-Schmidt process to $(\mathbf{a}_1'(\mathbf{t}),\dots,\mathbf{a}_\ell'(\mathbf{t}))$.

Finally, for all $1 \leq v,w \leq \ell$, define $F_{v,w} : \mathbb{R}^m \to \mathbb{R}$ by 
\begin{align}
\nonumber
F_{v,w}(\mathbf{t}) \ &:= \ \begin{cases}
\langle \mathbf{b}_v'(\mathbf{t}), \mathbf{a}_w'(\mathbf{t}) \rangle & \text{if } v < w \\
\langle \mathbf{b}_v'(\mathbf{t}), \mathbf{b}_v'(\mathbf{t}) \rangle & \text{if } v = w \\
\langle \mathbf{a}_w'(\mathbf{t}), \mathbf{b}_v'(\mathbf{t}) \rangle & \text{if } v > w
\end{cases} \\
\nonumber
&= \ \langle \mathbf{a}_w'(\mathbf{t}), \mathbf{a}_w'(\mathbf{t}) \rangle - \sum_{k=1}^{v-1} \frac{\langle  \mathbf{a}_w'(\mathbf{t}), \mathbf{b}_k'(\mathbf{t}) \rangle \langle \mathbf{b}_k'(\mathbf{t}), \mathbf{a}_v'(\mathbf{t}) \rangle}{\langle \mathbf{b}_k'(\mathbf{t}), \mathbf{b}_k'(\mathbf{t}) \rangle}  \\
\label{eq:recursive-variables}
&= \ \langle \mathbf{a}_w'(\mathbf{t}), \mathbf{a}_w'(\mathbf{t}) \rangle - \sum_{k=1}^{v-1} \frac{F_{w,k}(\mathbf{t}) F_{k,v}(\mathbf{t})}{F_{k,k}(\mathbf{t})}.
\end{align} 

\begin{proposition}
The eigenvalue of $D_{i_1,i_2}\circ\cdots\circ D_{i_m,i_1}$ associated with $|\cdot|^\alpha$ is
\begin{align}
\nonumber
&\left.\frac{\partial^m}{\partial t_1\cdots\partial t_m}\left(\prod_{v=1}^\ell \ \langle \mathbf{b}_v'(\mathbf{t}) ,\mathbf{b}_v'(\mathbf{t}) \rangle^{-\frac{\alpha_{i_{\sigma(v)}}}{2}}\right)\right\rvert_{\mathbf{t}=\mathbf{0}}\\
\label{elementary-eigenvalue-3}
= \ & \left.\frac{\partial^m}{\partial t_1\cdots\partial t_m}\left(\prod_{v=1}^\ell \ F_{v,v}(\mathbf{t})^{-\frac{\alpha_{i_{\sigma(v)}}}{2}}\right)\right\rvert_{\mathbf{t}=\mathbf{0}}.
\end{align} 
\end{proposition}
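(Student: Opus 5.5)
The plan is to compute the operator directly from Definition \ref{def:differential-operator}, evaluate at a convenient point, and read off the power function through the constructive Iwasawa decomposition of Lemma \ref{lemma:Iwasawa}.

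\textbf{Step 1 (unfolding the composition).} Iterating Definition \ref{def:differential-operator}, for smooth $F$ one has
\[
D_{i_1,i_2}\circ\cdots\circ D_{i_m,i_1}F(g) \ = \ \left.\frac{\partial^m}{\partial t_1\cdots\partial t_m}F\bigl(g\,(I+t_1E_{i_1,i_2})\cdots(I+t_mE_{i_m,i_1})\bigr)\right\rvert_{\mathbf{t}=\mathbf{0}},
\]
since each $D_\alpha$ acts by right translation. The order in which the factors appear depends on how composition is read. I will fix this ordering carefully so that the matrix whose Gram--Schmidt data we need is exactly $A^{-1}$, or possibly its transpose, as set up above. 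Checking this convention, including any transpose, is the one place where a sign or order error could creep in.

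\textbf{Step 2 (evaluation at the identity).} The power function is an eigenfunction of each such operator, which is the standard fact underlying the statement. Hence the eigenvalue equals the left side evaluated at $g=I$, where $|I|^\alpha=1$. It therefore suffices to compute $\bigl|(I+t_1E_{i_1,i_2})\cdots(I+t_mE_{i_m,i_1})\bigr|^\alpha$ as a function of $\mathbf{t}$.

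\textbf{Step 3 (Iwasawa coordinates).} By the proof of Lemma \ref{lemma:Iwasawa}, if $h^{-1}$ has columns $\mathbf{a}_v$ with Gram--Schmidt vectors $\mathbf{b}_v$, then $h=r^{-1}q^{-1}$. The diagonal of $r^{-1}$ is $\bigl(\langle\mathbf{b}_v,\mathbf{b}_v\rangle^{-1/2}\bigr)_v$. Writing $r^{-1}$ as (unipotent)$\cdot$(diagonal), the diagonal factor is $\mathrm{diag}(\langle\mathbf{b}_v,\mathbf{b}_v\rangle^{-1/2})$. Scaling by $Z(G)$ is harmless because $\sum\alpha_i=0$. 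Definition \ref{def:power-function} then gives $|h|^\alpha=\prod_{v=1}^n\langle\mathbf{b}_v,\mathbf{b}_v\rangle^{-\alpha_v/2}$.

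\textbf{Step 4 (reduction to $A'$).} Every $E_{i_j,i_{j+1}}$ has its row and column in $S=\{i_1,\dots,i_m\}$. Consequently $A^{-1}$ agrees with the identity on rows and columns outside $S$ and has zero off-diagonal entries there.

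For $v\notin S$ we get $\mathbf{a}_v=e_v$. This is orthogonal to every $\mathbf{a}_w$ with $w\in S$, since those lie in $\mathrm{span}\{e_s:s\in S\}$. Hence Gram--Schmidt decouples: $\mathbf{b}_v=e_v$ with norm $1$ for $v\notin S$. For $v\in S$, the projections onto earlier $e_{v'}$ with $v'\notin S$ vanish. The vectors $\mathbf{b}_v$ for $v\in S$ are thus the Gram--Schmidt vectors of the columns of the $S\times S$ block, taken in increasing order. Under the order-preserving relabelling $\rho$, that block is exactly $(A')^{-1}$.

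Under this relabelling, the index $v\in\{1,\dots,\ell\}$ corresponds to the original index $i_{\sigma(v)}$. So $|h|^\alpha=\prod_{v=1}^\ell\langle\mathbf{b}'_v,\mathbf{b}'_v\rangle^{-\alpha_{i_{\sigma(v)}}/2}$, which gives the first displayed expression.

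\textbf{Step 5 (the recursive form).} The second equality is immediate, since $F_{v,v}=\langle\mathbf{b}'_v,\mathbf{b}'_v\rangle$ by definition. The recursion \eqref{eq:recursive-variables} is only a re-expression of the Gram--Schmidt formula \eqref{eq:gram_schmidt_general}.

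The main obstacle is not conceptual but bookkeeping. The work lies in Step 1's product order and possible transpose, and in Step 4's claim that interleaving indices outside $S$ does not disturb Gram--Schmidt. I would verify both against the $n=2$, $m=2$ case, checking the Laplacian eigenvalue there, before writing the general argument.
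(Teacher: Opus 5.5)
Your route is the paper's route: take the Iwasawa diagonal of the perturbation matrix, read it off from Gram--Schmidt on the columns of its inverse, restrict to the $\ell\times\ell$ block, and use $F_{v,v}=\langle\mathbf{b}'_v,\mathbf{b}'_v\rangle$. Steps 3--5 are correct, and your Step 4 decoupling argument is more careful than the paper's.

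\textbf{Step 1 is the genuine problem.} You leave it unresolved, and as written it produces the wrong matrix. With the literal reading $(D_X\circ D_Y)F=D_X(D_YF)$, Definition~\ref{def:differential-operator} gives exactly your product $h=(I+t_1E_{i_1,i_2})\cdots(I+t_mE_{i_m,i_1})$. Gram--Schmidt would then act on the columns of $h^{-1}=(I+t_mE_{i_m,i_1})^{-1}\cdots(I+t_1E_{i_1,i_2})^{-1}$, which is neither $A^{-1}$ nor its transpose. The statement's $\mathbf{b}'_v$ come from $A=(I+t_mE_{i_m,i_1})\cdots(I+t_1E_{i_1,i_2})$. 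That is the product the paper writes in the first display of its proof, so the paper reads $D_{i_1,i_2}\circ\cdots\circ D_{i_m,i_1}$ as applying $D_{i_1,i_2}$ \emph{first}.

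The two readings give different values on individual operators. Take $(i_1,i_2)=(1,2)$:
\begin{itemize}
\item In your ordering, the upper unipotent factor $I+t_1E_{1,2}$ sits on the left. Left $N$-invariance of $|\cdot|^\alpha$ removes all $t_1$-dependence, so the mixed derivative is $0$.
\item The statement gives $F_{1,1}^{(\alpha_2-\alpha_1)/2}$ with $F_{1,1}=(1+t_1t_2)^2+t_2^2$. Its mixed derivative is $\alpha_2-\alpha_1$, which agrees with Theorem~\ref{thm:elementary-differential-operator}.
\end{itemize}
Your proposed sanity check would not catch this. The order-$2$ Casimir $\sum_{i_1,i_2}D_{i_1,i_2}\circ D_{i_2,i_1}$ is unchanged under reversal (swap $i_1\leftrightarrow i_2$). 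You therefore have to adopt the paper's convention explicitly and take $h=A$; no transpose enters.

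\textbf{Step 2 cites a fact that does not justify it.} The standard eigenfunction property of $|\cdot|^\alpha$ concerns central elements of $U(\mathfrak{g})$. A single elementary operator $D_{i_1,i_2}\circ\cdots\circ D_{i_m,i_1}$ is not central. What you need, and what the paper proves, is multiplicativity at upper-triangular base points $g=xy$. Write $A=x'y'k'$ and $yx'=x''y$ with $x''$ unipotent. Then $|xy\,A|^\alpha=|xx''\,yy'\,k'|^\alpha=|xy|^\alpha\,|y'|^\alpha$. This gives the eigenfunction property and the eigenvalue $\partial^m|y'|^\alpha/\partial t_1\cdots\partial t_m$ at $\mathbf{t}=\mathbf{0}$ at once, so evaluating at $g=I$ becomes unnecessary. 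Your Step 3 then identifies $|y'|^\alpha$ with $\prod_v\langle\mathbf{b}_v,\mathbf{b}_v\rangle^{-\alpha_v/2}$.
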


\begin{proof}
Acting on $|\cdot|^\alpha$ by $D_{i_1,i_2}\circ\cdots\circ D_{i_m,i_1}$ at any $g =xy \in \mathfrak{h}^n$, we get
\begin{align}
\nonumber
    & D_{i_1,i_2} \circ \cdots \circ D_{i_m,i_1} |g|^{\alpha} \\
    = \ & \left. \frac{\partial^m}{\partial t_1 \cdots \partial t_m} |xy(I+t_m E_{i_m,i_1}) \cdots (I+t_1E_{i_1,i_2})|^{\alpha}\right|_{\mathbf{t}=\mathbf{0}}.
\end{align}
Let the Iwasawa decomposition of $A = (I+t_mE_{i_m,i_1}) \cdots (I+t_1E_{i_m,i_1})$ be $x'y'k'$, where $x'$, $y'$, and $k'$ are unipotent, diagonal, and orthogonal matrices, respectively.
\begin{align}
    & D_{i_1,i_2}\circ\cdots\circ D_{i_m,i_1}|g|^{\alpha} \ = \ \left. \frac{\partial^m}{\partial t_1 \cdots \partial t_m}|xyx'y'k'|^{\alpha}\right|_{\mathbf{t}=\mathbf{0}}.
\end{align}
There exists a unipotent matrix $x''$ such that $yx'=x''y$. Furthermore, since $|\cdot|^\alpha$ is invariant under the left action of unipotent matrices (such as $x$ and $x''$) and the right action of orthogonal matrices (such as $k'$), and multiplicative over the set of diagonal matrices (including $y$ and $y'$), we have
\begin{align}
    \nonumber
    D_{i_1,i_2}\circ\cdots\circ D_{i_m,i_1}|g|^{\alpha} \ &= \  \left.\frac{\partial^m}{\partial t_1\cdots \partial t_m}|xx''yy'k'|^{\alpha}\right|_{\mathbf{t}=\mathbf{0}} \\
    \nonumber
    &= \ |y|^\alpha\left.\frac{\partial^m}{\partial t_1 \cdots \partial t_m}|y'|^{\alpha}\right|_{\mathbf{t}=\mathbf{0}} \\
    &= \ |g|^{\alpha}\left. \frac{\partial^m}{\partial t_1 \cdots \partial t_m}|y'|^{\alpha}\right|_{\mathbf{t}=\mathbf{0}}.
\end{align}
Therefore, the eigenvalue of $D_{i_1,i_2}\circ\cdots\circ D_{i_m,i_1}$ associated with $|\cdot|^\alpha$ is
\begin{align}\label{eq:elementary-eigenvalue-1}
\left. \frac{\partial^m}{\partial t_1 \cdots \partial t_m}|y'|^{\alpha}\right|_{\mathbf{t}=\mathbf{0}}.
\end{align}
To calculate the diagonal component $y'$ in the Iwasawa decomposition of $A$, we follow the constructive proof of Lemma \ref{lemma:Iwasawa}. Applying the Gram-Schmidt process to the columns of $A^{-1}$, namely $\mathbf{a}_1(\mathbf{t}),\dots,\mathbf{a}_n(\mathbf{t})$, we get a list of orthogonal vectors $(\mathbf{b}_1(\mathbf{t}),\dots,\mathbf{b}_n(\mathbf{t}))$. Then
\begin{align}\label{eq:diagonal-matrix}
y' \ = \ 
\begin{pmatrix}
\langle \mathbf{b}_1(\mathbf{t}), \mathbf{b}_1(\mathbf{t}) \rangle^{-\frac{1}{2}} & & \\
& \ddots & \\
& & \langle \mathbf{b}_n(\mathbf{t}), \mathbf{b}_n(\mathbf{t}) \rangle^{-\frac{1}{2}} 
\end{pmatrix},
\end{align}
and the eigenvalue of interest \eqref{eq:elementary-eigenvalue} is
\begin{align}\label{elementary-eigenvalue-2}
&\left.\frac{\partial^m}{\partial t_1\cdots\partial t_m}\left(\prod_{v=1}^n \ \langle \mathbf{b}_v(\mathbf{t}) ,\mathbf{b}_v(\mathbf{t}) \rangle^{-\frac{\alpha_v}{2}}\right)\right\rvert_{\mathbf{t}=\mathbf{0}}.
\end{align}

Note that for all $1 \leq j \leq m$, 
\begin{align}
(I+t_jE_{i_j,i_{j+1}})^{-1} \ = \ 
\begin{cases}
I - t_j E_{i_j,i_{j+1}} & \text{if } i_j \neq i_{j+1}, \\
I - \frac{t_j}{1+t_j} E_{i_j,i_{j+1}} & \text{if } i_j = i_{j+1}.
\end{cases}
\end{align}
.In this view, if $v\neq i_j$ for all $1 \leq j \leq m$, or $w\neq i_j$ for all $1 \leq j \leq m$, then $a_{v,w}(\mathbf{t})=\delta_{v,w}$ is the Kronecker delta. Therefore, if $v \neq i_j$ for all $1 \leq j \leq m$, then $\mathbf{b}_v(\mathbf{t})$ is the $v$\textsuperscript{th} standard vector in $\mathbb{R}^n$, and the $v$\textsuperscript{th} component of $\mathbf{b}_w(\mathbf{t})$ is $0$ for all $1 \leq w \leq n$, $w \neq v$. Based on this observation, we can replace $A$ with $A'$, the submatrix of $A$ consisting only of rows and columns $i_1,\dots,i_m$ (ordered relatively). This removes the dependence of our calculation on $n$ and also demonstrates its dependence only on the relative ordering $\rho$ of $i_1, \dots, i_m$. Rewriting the target expression \eqref{elementary-eigenvalue-2} with $A'$ in place of $A$ gives \eqref{elementary-eigenvalue-3}.

\section{Graph theory}\label{sec:graph-theory}

\subsection{Background}

Let $G = G_{(i_1, \dots, i_m)}$ be a graph on $\leq m$ vertices $\{i_1, \dots, i_m\}$ with $m$ edges $\{e_1, \dots, e_m\}$, where $e_j=[i_j,i_{j+1})$ is a directed edge from $i_j$ to $i_{j+1}$ with weight
\begin{equation}
w_j \ = \ 
\begin{cases}
-t_j & \text{if } i_j \neq i_{j+1}, \\
-\frac{t_j}{1+t_j} & \text{if } i_j = i_{j+1},
\end{cases}
\end{equation}
for all $1 \leq j \leq m$. The graph is allowed to have multi-edges: if $e_j$ and $e_{j'}$ start and end at the same points (i.e., $i_j = i_{j'}$ and $i_{j+1} = i_{j'+1}$) for some $j \neq j'$, we still consider $e_j$ and $e_{j'}$ as different edges. The edges are also ordered: if $j \leq k$, then we cannot traverse $e_j$ if we previously traversed $e_k$; in other words, paths on $G$ (see Definition \ref{def:path}) necessarily traverse its edges in increasing order. 

While the following definitions apply for general directed, edge-ordered, weighted graphs, we content ourselves with stating them in the specificity of $G$. 

\begin{definition}[vertex set]\label{def:vertex-set-main}
Let $S = \{j_1, \dots, j_k\}$ be a non-empty subset of $\{1, \dots, m\}$. We call \begin{equation}\label{eq:vertex-set-main}
\nu(S) \ = \ \left\{i_{j_1}, \ i_{j_1+1}, \ i_{j_2}, \ i_{j_2+1}, \ \dots, \ i_{j_k}, \ i_{j_k+1}\right\}
\end{equation} 
the \textit{vertex set of $S$}, and denote the first and second minimum elements of $\nu(S)$ by $v_1(S)$ and $v_2(S)$, respectively. In the case that $|\nu(S)| = 1$, we adopt the convention $v_2(S) = 0$. 
\end{definition}

\begin{definition}[indegree, outdegree]\label{def:diff-degree}
Consider any $v \in \{i_1, \dots, i_m\}$, and $S =  \subset \{1, \dots, m\}$. The indegree (outdegree) of $v$ in $S$ is the number of edges that go enter (leave) $v$ in $S$:
\begin{align}
\text{in}_v(S) \ &:= \ \# \{j \in S \ | \ i_{j+1} = v\}, \\
\text{out}_v(S) \ &:= \ \# \{j \in S \ | \ i_j = v\}, \\
\Delta_v(S) \ &:= \ \text{in}_v(S) - \text{out}_v(S). 
\end{align}
\end{definition}

\begin{definition}[path, cycle]\label{def:cycle-main}
Let $S = \{j_1 < j_2 < \cdots < j_k\}$ be a non-empty subset of $\{1, \dots, m\}$. We call $S$ a \textit{path} from $v$ to $w$ if $e_{j_1} \to e_{j_2} \to \cdots \to e_{j_k}$ defines a path from $v$ to $w$ on $G$, i.e.,
\begin{align}
\label{eq:matrix-multiplication}
& E_{i_{j_1},i_{j_1+1}} E_{i_{j_2},i_{j_2+1}} \cdots E_{i_{j_k},i_{j_k+1}} \ = \ E_{v,w} \\
\label{eq:path}
\iff \ & v \ = \ i_{j_\ell}, \ \ i_{j_\ell+1} \ = \ i_{j_{\ell+1}} \ \text{ for all } \ 1 \leq \ell \leq k-1, \ \text{ and } \ i_{j_k+1} \ = \ w, 
\end{align}
where $E_{i,j}$ is the $n \times n$ elementary matrix with $1$ in position $(i,j)$ and $0$ everywhere else. In the case $v = w$, we also refer to $S$ as a cycle around $v$. We establish the convention that the empty path $\emptyset$ is a cycle around $v$ for all $1 \leq v \leq n$. 
\end{definition}

\begin{definition}[proper cycle]\label{def:proper-cycle-main}
We call $S = \{j_1 < \cdots < j_k\} \subset \{1, \dots, m\}$ a proper cycle around $v$ if it is a cycle around $v$ and $i_{j_1} < i_{j_\ell}$ for all $2 \leq \ell \leq k$. 
\end{definition}

\begin{definition}[weight of a path]\label{def:path} 
Let $S \subset \{1,\dots,m\}$ be a path in $G$. The weight of $S$ is the product of the weights of the edges in $S$ (the empty product being $1$):
\begin{equation}\label{eq:weight}
w(S) \ := \ \prod_{j \in S} w_j.
\end{equation}
\end{definition}

When we expand the product
\begin{equation}\label{eq:matrix-A1}
A \ = \ (I+t_1E_{i_m,i_1})^{-1}\cdots(I+t_mE_{i_1,i_2})^{-1},
\end{equation}
each term in the expansion corresponds to a simultaneous choice of $I$ or $-t_jE_{i_j, i_{j+1}}$ over all $1 \leq j \leq m$, which in turn corresponds to a unique subset $S \subset \{1, \dots, m\}$. Therefore, we have
\begin{align}
A \ = \ \sum_{S \subset \{1, \dots, m\}}\left(\frac{\prod\limits_{j\in S} \ \ \ -t_j}{\prod\limits_{\substack{j \in S \\ i_j = i_{j+1}}}1+t_j}\right)\left(\prod_{j \in S}E_{i_j,i_{j+1}}\right),
\end{align}
where the elementary matrix product is taken along the ascending order of the elements of $S$, and the empty product (corresponding to $S=\emptyset$) is the identity matrix $I$. Our graph-theoretic conception of matrix multiplication \eqref{eq:matrix-multiplication} is that \begin{equation}
\prod_{j \in S}E_{i_j,i_{j+1}} \ = \ 
\begin{cases}
E_{v,w} &\text{if } S \text{ is a path from } v \text{ to } w, \\
0 &\text{otherwise}.
\end{cases}
\end{equation}
This perspective also applies in the trivial case $S=\emptyset$ since $\emptyset$ is defined to be a cycle around $v$ for all $1 \leq v \leq n$, and the empty product is exactly $I=\sum_{v=1}^n E_{v,v}$.
This leads to the following characterization:
\begin{align}
\nonumber
& A \ = \ \sum_{1 \leq v,w \leq \ell}\left(\sum_{\substack{S\subset \{1, \dots, m\} \\ \text{is a path} \\ \text{from } v \text{ to } w}} \ \frac{\prod\limits_{j\in S} \ \ \ -t_j}{\prod\limits_{\substack{j \in S, \\ i_j = i_{j+1}}}1+t_j}\right) E_{v,w} \\
\label{eq:component-functions}
\implies \ & a_{v,w}(\mathbf{t}) \ = \ \sum_{\substack{S\subset \{1, \dots, m\} \\ \text{is a path} \\ \text{from } v \text{ to } w}} \ \frac{\prod\limits_{j\in J} \ \ \ -t_j}{\prod\limits_{\substack{j \in S, \\ i_j = i_{j+1}}}1+t_j}.
\end{align}
This interpretation of $A$ as the path matrix of $G$ (i.e., the entry in position $(v,w)$ of $A$ is the sum of the weights of all the paths from $v$ to $w$ on $G$) is at the heart of our proof. 

%
%

\subsection{Combinatorial Lemmas for Differentiation}\label{subsec:derivation}
We are interested in calculating mixed partial derivatives of multivariable functions. While this is an exercise in calculus, we approach it combinatorially in keeping with our graph-theoretic approach. The following lemma offers a combinatorial perspective on the generalized product rule for mixed partial derivatives.

\begin{lemma}\label{lemma:prod-rule}
Consider any integer $k \geq 1$. For each $1 \leq i \leq k$, let $f_i:\mathbb{R}^m\to\mathbb{C}$ be a complex-valued function in $m$ real variables $t_1,\dots,t_m$ that is at least $m$ times differentiable at the origin $\mathbf{0}$. Then the product of these $k$ functions $\prod_{i=1}^k f_i:\mathbb{R}^m\to\mathbb{C}$ is also a complex-valued function in $t_1,\dots,t_m$ that is at least $m$ times differentiable at $\mathbf{0}$. Furthermore, for any $S \subset \{1, \dots, m\}$, the mixed partial derivative of $\prod_{i=1}^k f_i$ at $\mathbf{0}$ with respect to $t_j$ ($j\in S$) is 
\begin{equation}\label{eq:prod-rule}
\frac{\partial^{|S|}\left(\prod_{i=1}^k f_i\right)}{\prod_{j \in S}\partial t_j}(\mathbf{0}) \ = \ \sum_{\sqcup_{i=1}^k S_i = S} \ \prod_{i=1}^k \  \frac{\partial^{|S_i|} f_i}{\prod_{j \in S_i }\partial t_j}(\mathbf{0}),
\end{equation}
where the sum is over ordered partitions of $S$.
\end{lemma}

That is, we can express the mixed partial derivative of a $k$-fold product of functions with respect to ``the variables in $S$" by ``distributing'' these variables across the $k$ functions and considering their mixed partial derivatives individually. 

\begin{proof}
We prove this lemma via (finite) induction on $|S|$. The base case $|S|=0$ is satisfied in that we only need to consider the trivial partition $\emptyset=\sqcup_{i=1}^k \emptyset$.  

For the inductive case, we assume that the lemma holds when $0\leq|S|=m'<m$. Take any $S\subset\{1, \dots, m\}$ such that $|S|=m'+1$, and any $j'\in S$. We know that the lemma holds for $S\backslash\{j'\}$, so
\begin{equation}
\frac{\partial^{|S|-1}\left(\prod_{i=1}^k f_i\right)}{\prod_{j \in S\backslash\{j'\} }\partial t_j}(\mathbf{0}) \ = \ \sum_{\sqcup_{i=1}^k S_i = S\backslash\{j'\}} \ \prod_{i=1}^k \  \frac{\partial^{|S_i|} f_i}{\prod_{j \in S_i }\partial t_j}(\mathbf{0}).
\end{equation}
Differentiating both sides with respect to $t_{j'}$ and applying the familiar product rule for single-variable derivatives, we get
\begin{align}\label{eq:ind-prod-rule}
\nonumber
\frac{\partial^{|S|}\left(\prod_{i=1}^k f_i\right)}{\prod_{j \in S}\partial t_j}(\mathbf{0}) \ &= \ \sum_{\sqcup_{i=1}^k S_i = S\backslash\{j'\}} \frac{\partial}{\partial t_{j'}}\left(\prod_{i=1}^k \  \frac{\partial^{|S_i|} f_i}{\prod_{j \in S_i }\partial t_j}\right)(\mathbf{0}) \\
\nonumber
&= \ \sum_{\sqcup_{i=1}^k S_i = S\backslash\{j'\}}\left(\sum_{i=1}^k \frac{\partial^{|S_{i}|+1} f_{i}}{\prod\limits_{j \in S_{i}\cup\{j'\}}\partial t_j}\prod_{i' \neq i}\frac{\partial^{|S_{i'}|} f_{i'}}{\prod_{j \in S_{i'} }\partial t_j}\right)(\mathbf{0}) \\
&= \ \sum_{\sqcup_{i=1}^k S_i = S} \ \prod_{i=1}^k \ \frac{\partial^{|S_i|} f_i}{\prod_{j \in S_i}\partial t_j}(\mathbf{0}).
\end{align}
To generate the $k$-partitions of $S$, we can generate the $k$-partitions of $S\backslash\{j'\}$; and for each partition $\sqcup_{i=1}^k S_i = S\backslash\{j'\}$, we can select any $S_{i}$ and append $j'$ to it. This recursive procedure guides the last equality above. This completes the induction. 
\end{proof}

In this vein, we also develop a combinatorial outlook on the generalized power rule for mixed partial derivatives. 

\begin{lemma}\label{lemma:power-rule}
Let $\beta \in \mathbb{C}$, and let $f:\mathbb{R}^m\to\mathbb{C}$ be a complex-valued function in $m$ real variables $t_1,\dots,t_m$ that is at least $m$ times differentiable in a neighborhood of the origin $\mathbf{0}$. Then the power function $f^\beta:\mathbb{R}^m\to\mathbb{C}$ is also a complex-valued function in $t_1,\dots,t_m$ that is at least $m$ times differentiable in some neighborhood of $\mathbf{0}$. Furthermore, for any non-empty subset $S \subset \{1, \dots, m\}$, the mixed partial derivative of $f^\beta$ at the origin with respect to $t_j$ ($j\in S$) is 
\begin{equation}\label{eq:power-rule}
\frac{\partial^{|S|}(f^\beta)}{\prod_{j \in S}\partial t_j}(\mathbf{0}) \ = \ \sum_{k=1}^{|S|} \ (\beta)_k \cdot f^{\beta-k}(\mathbf{0})\sum_{\substack{\sqcup_{i=1}^k S_i = S, \\ S_i \neq \emptyset}} \ \prod_{i=1}^k \ \frac{\partial^{|S_i|}f}{\prod_{j\in S_i}\partial t_j}(\mathbf{0}),
\end{equation}
where the sum is over unordered partitions of $S$, and $(\beta)_k:=\beta(\beta-1)\cdots(\beta-k+1)$ denotes the falling factorial. 
\end{lemma} 

\begin{proof}
Again, we prove this lemma via induction on $|S|$. The base case $|S|=1$ corresponds to the familiar power rule for single-variable derivatives. Letting $S=\{j\}\subset\{1,\dots,m\}$, we have
\begin{equation}
    \frac{\partial (f^\beta)}{\partial t_j}(\mathbf{0}) \ = \ \beta f^{\beta-1}(\mathbf{0}) \ \frac{\partial f}{\partial t_j}(\mathbf{0}).
\end{equation}

For the inductive case, we assume that the lemma holds when $1\leq|S|=m'<m$. Take any $S\subset\{1,\dots,m\}$ such that $|S|=m'+1$, and any $j'\in S$. We assumed that the lemma holds that for $S\backslash\{j'\}$, so 
\begin{equation}
\frac{\partial^{|S|} (f^\beta)}{\prod_{j \in S\backslash\{j'\}}\partial t_j}(\mathbf{0}) \ = \ \sum_{k=1}^{|S|-1} \ (\beta)_k \cdot f^{\beta-k}(\mathbf{0})\sum_{\sqcup_{i=1}^k S_i = S\backslash\{j'\}} \ \prod_{i=1}^k \ \frac{\partial^{|S_i|}f}{\prod_{j\in S_i}\partial t_j}(\mathbf{0}).
\end{equation}
Differentiating both sides with respect to $t_{j'}$ and applying the familiar product and power rules for single-variable derivatives, we get
\begin{align}
\nonumber
& \frac{\partial^{|S|} (f^\beta)}{\prod_{j \in S}\partial t_j}(\mathbf{0}) \\
\nonumber
= \ & 
\sum_{k=1}^{|S|-1} \ (\beta)_k \ \frac{\partial}{\partial t_{j'}}\left(f^{\beta-k}(\mathbf{0})\sum_{\sqcup_{i=1}^k S_i = S\backslash\{j'\}} \prod_{i=1}^k \ \frac{\partial^{|S_i|}f}{\prod_{j\in S_i}\partial t_j}\right)(\mathbf{0})\\
\label{eq:ind-power-rule1}
= \ & 
\sum_{k=1}^{|S|-1} \ (\beta)_{k+1} \cdot f^{\beta-k-1}(\mathbf{0}) \ \sum_{\substack{\sqcup_{i=1}^{k+1} S_i = S, \\ S_{k+1}=\{j'\}}} \ \prod_{i=1}^{k+1} \ \frac{\partial^{|S_i|}f}{\prod_{j\in S_i}\partial t_j}(\mathbf{0}) \\
\label{eq:ind-power-rule2}
+ \ & \sum_{k=1}^{|S|-1} \ (\beta)_k \cdot f^{\beta-k}(\mathbf{0})\sum_{\sqcup_{i=1}^k S_i = S\backslash\{j'\}} \frac{\partial}{\partial t_{j'}}\left(\prod_{i=1}^k \ \frac{\partial^{|S_i|}f}{\prod_{j\in S_i}\partial t_j}\right)(\mathbf{0})
\end{align}
Changing the index of summation in \eqref{eq:ind-power-rule1} and applying the product rule to \eqref{eq:ind-power-rule2}, we get
\begin{align}
\nonumber
& \frac{\partial^{|S|} (f^\beta)}{\prod_{j \in S}\partial t_j}(\mathbf{0}) \\
\nonumber
= \ & 
\sum_{k=2}^{|S|} \ (\beta)_k \cdot f^{\beta-k}(\mathbf{0}) \ \sum_{\substack{\sqcup_{i=1}^{k} S_i = S, \\ S_{k}=\{j'\}}} \ \prod_{i=1}^{k} \ \frac{\partial^{|S_i|}f}{\prod_{j\in S_i}\partial t_j}(\mathbf{0}) \\
\nonumber
+ \ & \sum_{k=1}^{|S|-1} \ (\beta)_k \cdot f^{\beta-k}(\mathbf{0})\sum_{\sqcup_{i=1}^k S_i = S\backslash\{j'\}} \sum_{i=1}^k \left(\frac{\partial^{|S_i|+1}f}{\prod\limits_{j\in S_i\cup\{j'\}}\partial t_j}\prod_{i'\neq i} \ \frac{\partial^{|S_{i'}|}f}{\prod_{j\in S_{i'}}\partial t_j}\right)(\mathbf{0}) \\
\label{eq:ind-power-rule3}
= \ & 
\sum_{k=2}^{|S|} \ (\beta)_k \cdot f^{\beta-k}(\mathbf{0}) \ \sum_{\substack{\sqcup_{i=1}^{k} S_i = S, \\ S_{k}=\{j'\}}} \ \prod_{i=1}^{k} \ \frac{\partial^{|S_i|}f}{\prod_{j\in S_i}\partial t_j}(\mathbf{0}) \\
\label{eq:ind-power-rule4}
+ \ & \sum_{k=1}^{|S|-1} \ (\beta)_k \cdot f^{\beta-k}(\mathbf{0})\sum_{\sqcup_{i=1}^k S_i = S} \ \prod_{i=1}^k \ \frac{\partial^{|S_{i}|}f}{\prod_{j\in S_{i}}\partial t_j}(\mathbf{0}).
\end{align}
The above equality can be justified using the same recursive argument for $k$-partitions of $S$ as in the proof of the previous lemma. Furthermore, note that the first sum \eqref{eq:ind-power-rule3} corresponds to (non-empty) partitions $\sqcup_{i=1}^k S_i$ of $S$ in which one of the partition elements is $S_k=\{j'\}$; and the second sum corresponds to partitions of $S$ in which none of the partition elements equal $\{j'\}$, i.e., the partition element containing $j'$ contains at least one other member of $S$ as well. Together, these sums account for all the partitions of $S$, so
\begin{align}
\frac{\partial^{|S|} (f^\beta)}{\prod_{j \in A}\partial t_j}(\mathbf{0}) \ = \ & \sum_{k=1}^{|S|} \ (\beta)_{k} \cdot f^{\beta-k}(\mathbf{0}) \ \sum_{\sqcup_{i=1}^{k+1} S_i = S} \ \prod_{i=1}^{k} \ \frac{\partial^{|S_i|}f}{\prod_{j\in S_i}\partial t_j}(\mathbf{0}).
\end{align}  
This completes the induction. 
\end{proof}

%
%

\subsection{Proof of Theorem 1.5}\label{subsec:main-thm}
Now, we extend the graph-theoretic interpretation \eqref{eq:component-functions} to the recursive Iwasawa variables $F_{v,w}(\mathbf{t})$.  

\begin{proposition}\label{prop:component-derivative}
For all $1 \leq v, w \leq \ell$ and $S \subset \{1, \dots, m\}$,
\begin{equation}\label{eq:A-partitions}
\frac{\partial^{|S|}x_{v,w}}{\prod_{j \in S}\partial t_j}(\mathbf{0}) \ = \ \begin{cases}
    (-1)^{|S|} & \text{if } S \text{ generates a path from } v \text{ to } w, \\
    0 & \text{otherwise}.
\end{cases}
\end{equation}
\end{proposition}
\begin{proof}
We consider
\begin{align}\label{eq:component-derivative}
\frac{\partial^{|S|}x_{v,w}}{\prod_{j \in S}\partial t_j}(\mathbf{0}) \ &= \ \sum_{\substack{T \subset \{1, \dots, m\} \\ \text{ generates a path} \\ \text{from } v \text{ to } w \text{ in } G}} \left.\frac{\partial^{|S|}}{\prod_{j \in S}\partial t_j} \left(\frac{\prod\limits_{j \in T} \ \ \ -t_j}{\prod\limits_{\substack{j \in T, \\ \rho(j) = \rho(j+1)}} 1+t_j}\right)\right|_{\mathbf{t}=\mathbf{0}}.
\end{align}
Every term of $x_{v,w}(\mathbf{t})$ corresponds to some $T \subset \{1, \dots, m\}$ that generates a path in $G$ from $v$ to $w$. If $T \neq S$, then
\begin{equation}
\left.\frac{\partial^{|S|}}{\prod_{j \in S}\partial t_j} \left(\frac{\prod\limits_{j \in T} \ \ \ -t_j}{\prod\limits_{\substack{j \in T, \\ \rho(j) = \rho(j+1)}} 1+t_j}\right)\right|_{\mathbf{t}=\mathbf{0}} \ = \ 0;
\end{equation}
however, if $T = S$, then
\begin{align}
\nonumber
&\left.\frac{\partial^{|S|}}{\prod_{j \in S}\partial t_j} \left(\frac{\prod\limits_{j \in T} \ \ \ -t_j}{\prod\limits_{\substack{j \in T, \\ \rho(j) = \rho(j+1)}} 1+t_j}\right)\right|_{\mathbf{t}=\mathbf{0}} \\
\nonumber
&= \ \prod_{\substack{j \in S, \\ \rho(j) \neq \rho(j+1)}}\left(\left.\frac{\partial}{\partial t_j}(-t_j)\right|_{t_j=0}\right)\prod_{\substack{j \in S, \\ \rho(j) = \rho(j+1)}}\left(\left.\frac{\partial}{\partial t_j}\left(\frac{-t_j}{1+t_j}\right)\right|_{t_j=0}\right) \\
&= \ (-1)^{|S|}.
\end{align}
This yields the desired equation for $\frac{\partial^{|S|}x_{v,w}}{\prod_{j \in S}\partial t_j}(\mathbf{0})$.
\end{proof}

\begin{proposition}
For all $1 \leq v, w \leq \ell$ and $S \subset \{1, \dots, m\}$,
\begin{equation}\label{eq:A-partitions}
\left.\frac{\partial^{|S|}}{\prod_{j \in S}\partial t_j} \langle \mathbf{x}_v(\mathbf{t}),\mathbf{x}_w(\mathbf{t}) \rangle\right|_{\mathbf{t} = 0} \ = \ \sum_{\substack{S_v \sqcup_\star S_w = S}} (-1)^{|S|},
\end{equation}
the sum being over partitions $S_v \sqcup S_w$ of $S$, where $S_v$ and $S_w$ generate paths in $G$ starting at the same vertex and ending at $v$ and $w$, respectively. 
\end{proposition}
\begin{proof}
The product rule for multiple partial derivatives gives
\begin{align}
\nonumber
\left.\frac{\partial^{|S|}}{\prod_{j \in S}\partial t_j} \langle \mathbf{a}_v'(\mathbf{t}),\mathbf{a}_w'(\mathbf{t}) \rangle\right|_{\mathbf{t} = 0} \ =& \ \sum_{k=1}^\ell \left.\frac{\partial^{|S|}}{\prod_{j \in S}\partial t_j} \left(x_{k,v}(\mathbf{t}) \cdot x_{k,w}(\mathbf{t})\right)\right|_{\mathbf{t} = 0} \\
=& \ \sum_{k=1}^\ell \sum_{S_1 \sqcup S_2 = S} \frac{\partial^{|S_1|}x_{k,v}}{\prod_{j \in S_1}\partial t_j}(\mathbf{0}) \cdot \frac{\partial^{|S_2|}x_{k,w}}{\prod_{j \in S_2}\partial t_j}(\mathbf{0}).
\end{align}
We compute each term in the sum applying Proposition \ref{prop:component-derivative}:  
\begin{align}
\nonumber
&\frac{\partial^{|S_1|}x_{k,v}}{\prod_{j \in S_1}\partial t_j}(\mathbf{0}) \cdot \frac{\partial^{|S_2|}x_{k,w}}{\prod_{j \in S_2}\partial t_j}(\mathbf{0})\\
&= \ \begin{cases}
    (-1)^{|S_1|+|S_2|} & \text{if } S_1 \ (\text{resp. } S_2) \text{ gen. a path from } k \text{ to } v \  (\text{resp. } w), \\
    0 & \text{otherwise}.
\end{cases}
\end{align}
This gives
\begin{align}
\nonumber
\left.\frac{\partial^{|S|}}{\prod_{j \in S}\partial t_j} \langle \mathbf{x}_v(\mathbf{t}),\mathbf{x}_w(\mathbf{t}) \rangle\right|_{\mathbf{t} = 0} \ &= \ \sum_{k=1}^\ell \sum_{\substack{S_1 \sqcup S_2 = S, \\ S_1 \ (\text{resp. } S_2) \\ \text{gen. a path from} \\ k \text{ to } v \ (\text{resp. } w)}} (-1)^{|S|} \\
&= \ \sum_{\substack{S_v \sqcup_\star S_w = S}} (-1)^{|S|},
\end{align}
the sum being over all partitions $S_v \sqcup S_w$ of $S$, where $S_v$ and $S_w$ generate paths in $G$ starting at a common vertex $1 \leq k \leq \ell$ and ending at $v$ and $w$, respectively. 
\end{proof}

Let $r \geq 2$,
\begin{align*}
C \ &= \ (c_i)_{1 \leq i \leq r-1} \in \{1, \dots, \ell\}^{r-1},\\
D \ &= \ (d_i)_{1 \leq i \leq r} \in \{1, \dots, \ell\}^r,
\end{align*}
and $S \subset \{1, \dots, m\}$. We define
\begin{align}
    \nonumber
    \mathcal{P}(C,D,S) \ &:= & \\
    &\left\{((P_i,Q_i))_{1 \leq i \leq r-1} \left| \begin{array}{l}
        S = \bigsqcup_{i=1}^{r-1} (P_i \sqcup Q_i); \\
        \forall 1 \leq i \leq r-1: \\ 
        P_i \neq \emptyset \text{ or } Q_i \neq \emptyset, \\ 
        P_i \text{ and } Q_i \text{ generate paths in } G \\ 
        \text{both starting at } c_i \text{ and ending} \\
        \text{at } d_i \text{ and } d_{i+1}, \text{ respectively.}
    \end{array} \right.\right\}.
\end{align}
Next, for all $1 \leq v,w \leq \ell$, we define
\begin{equation}
    \mathcal{P}(v,w,S) \ := \ \bigsqcup_{r=2}^{|S|} \ \bigsqcup_{\substack{C \in \{1, \dots, \ell\}^{r-1}, \\ D \in \{v\} \times \{1, \dots, \min(v,w)-1\}^{r-2} \times \{w\}}} \ \mathcal{P}(C,D,S).
\end{equation}
Every partition $P \in \mathcal{P}(v,w,S)$ is identified with its
\begin{enumerate}
    \item length $|P| = r$,
    \item start set $C(P) := C$,
    \item end set $D(P) := D$, and
    \item path set $S(P) := \{P_i, Q_i\}_{1 \leq i \leq r-1}$.
\end{enumerate}

\begin{proposition}
    For all $1 \leq v, w \leq \ell$ and $S \subset \{1, \dots, m\}$, 
    \begin{align}
        \frac{\partial^{|S|}F_{v,w}}{\prod_{j \in S} \partial t_j}(\mathbf{0}) \ = \ \sum_{P \in \mathcal{P}(C,D,S)} (-1)^{|S|+|P|}.
    \end{align}
\end{proposition}

Now, we define an equivalence relation $\sim$ on $\mathcal{P}(v,w,S)$. Consider any $P = ((P_i,Q_i)) \in \mathcal{P}(v,w,S)$ and $1 \leq i \leq |P|-1$. 
\begin{enumerate}
    \item If $Q_i \neq \emptyset$ and $P_{i+1} = \emptyset$, and if the path (from $c_i$ to $d_{i+1}$) generated by $Q_i$ precedes the path (from $c_{i+1} = d_{i+1}$ to $d_{i+2}$) generated by $Q_{i+1}$, then the first edge of the path generated by $Q_{i+1}$ is called a \textit{pivot point of type}.
    \item If $P_{i+1} \neq \emptyset$ and $Q_i = \emptyset$, and if the path (from $c_{i+1}$ to $d_{i+1}$) generated by $P_{i+1}$ precedes the path (from $c_i = d_{i+1}$ to $d_i$) generated by $P_i$, then the first edge of the path generated $P_i$ is called a \textit{pivot point of type}.
    \item If $Q_i = P_{i+1} = \emptyset$, then the first edge of the path generated by $P_i$ is called a \textit{pivot point of type}
    \item If an intermediate (i.e., not the first) edge $e$ in the path generated by $P_i$ or $Q_i$ is directed from a vertex strictly less than $\min(v,w)$, then $e$ is called a \textit{pivot point of type}.
    \item If $c_i < \min(v,w)$, i.e., the first edges of the paths generated by $P_i$ and $Q_i$ are directed from $c_i$ which is strictly less than $\min(v,w)$, then these first edges are jointly called a \textit{pivot point of type} 
\end{enumerate}

%
%

\section{Eigenvalues of the Casimir operators}\label{sec:final}

Let $\phi$ be a Maass form for $SL(n,\mathbb{Z})$ with Langlands parameters $\alpha = (\alpha_1, \dots, \alpha_n) \in \mathbb{C}^n$. The eigenvalue of
\begin{equation}
\mathcal{D}_{m,n} \ := \ \sum_{(i_1,\dots,i_m) \in \{1, \dots, n\}} D_{i_1,i_2} \circ \cdots \circ D_{i_{m-1},i_m} \circ D_{i_m, i_1}
\end{equation}
associated with $\phi$ is the equal to the eigenvalue of $\mathcal{D}_{m,n}$ associated with $|\cdot|^{\alpha + \rho}$, where $\rho_i = \frac{n+1}{2} - i$ is the canonical phase shift (from root theory). Theorem \ref{thm:elementary-differential-operator}, which we proved in Section \ref{sec:graph-theory}, explicitly gives the eigenvalue of $D_{i_1,i_2} \circ \cdots \circ D_{i_{m-1},i_m} \circ D_{i_m, i_1}$ associated with $|\cdot|^{\alpha+\rho}$ for all $(i_1, \dots, i_m) \in \{1, \dots, n\}^m$. We list these explicit formulas in the case $m=2$ and $3$ in Tables \ref{table:elementary-differential-operators2} and \ref{table:elementary-differential-operators3}, respectively. 

\begin{table}[ht]
    \centering
    \begin{tabular}{|c|c|}
        \hline
        & Eigenvalue of $D_{i_1,i_2} \circ D_{i_2,i_1}$ for $|\cdot|^{\alpha+\rho}$ \\
        \hline
        $i_1 > i_2$ & 0 \\
        $i_1 = i_2$ & $\left(\alpha_{i_1}+\frac{n+1}{2} - i_1\right)^2$ \\
        $i_1 < i_2$ & $-\alpha_{i_1}+\alpha_{i_2}+i_1-i_2$ \\
        \hline
    \end{tabular}
    \vspace{2mm}
    \caption{Eigenvalues of the elementary differential \\ operators of order $2$ for $\mathfrak{h}^n$}
    \label{table:elementary-differential-operators2}
\end{table}

\begin{table}[ht]
    \footnotesize
    \centering
    \begin{tabular}{|c|c|}
        \hline
        & Eigenvalue of $D_{i_1,i_2} \circ D_{i_2,i_3} \circ D_{i_3,i_1}$ for $|\cdot|^{\alpha+\rho}$ \\
        \hline
        $i_1 > i_2$ & 0 \\
        $i_1 > i_3$ & 0 \\
        $i_1 < i_2 < i_3$ & $\alpha_{i_1}-\alpha_{i_2}-i_1+i_2$ \\
        $i_1 < i_3 < i_2$ & $\alpha_{i_1}-\alpha_{i_3}-i_1+i_3$ \\
        $i_1 = i_2 < i_3$ & $\left(\alpha_{i_1}+\frac{n+1}{2}-i_1\right)\left(-\alpha_{i_1}+\alpha_{i_3}+i_1-i_3\right)$ \\
        $i_1 = i_3 < i_2$ & $\left(\alpha_{i_1}+\frac{n+1}{2}-i_1\right)\left(-\alpha_{i_1}+\alpha_{i_2}+i_1-i_2\right)$ \\
        $i_1 < i_2 = i_3$ & $\alpha_{i_1}-\alpha_{i_2}-i_1+i_2+\left(\alpha_{i_1}+\frac{n+1}{2}-i_1\right)\left(-\alpha_{i_1}+\alpha_{i_2}+i_1-i_2\right)$ \\
        $i_1 = i_2 = i_3$ & $\left(\alpha_{i_1}+\frac{n+1}{2}-i_1\right)^3$ \\
        \hline
    \end{tabular}
    \vspace{2mm}
    \caption{Eigenvalues of the elementary differential \\ operators of order $3$ for $\mathfrak{h}^n$}
    \label{table:elementary-differential-operators3}
\end{table}

We recall the following relations on Langlands parameters:
\begin{align}\label{eq:symmetric-polynomials}
&\sum_{i=1}^n \alpha_i = 0, \quad \left(\sum_{i=1}^n \alpha_i\right)^2 = 0 \ \implies \ \sum_{1 \leq i_1 < i_2 \leq n}\alpha_{i_1}\alpha_{i_2} = -\frac{1}{2}\sum_{i=1}^n\alpha_i^2.
\end{align}
We also recall the following formulas for the sum of first $n$ squares/cubes:
\begin{align}
&\sum_{j=1}^n j^2 \ = \ \frac{n(n+1)(2n+1)}{6}, \\
&\sum_{j=1}^n j^3 \ = \ \frac{n^2(n+1)^2}{4}.
\end{align}
Bearing these relations in mind, we sum the eigenvalues in Table \ref{table:elementary-differential-operators2} over all $(i_1,i_2) \in \{1, \dots, n\}^2$ to get the eigenvalue of $\mathcal{D}_{2,n}$:
\begin{equation}
\sum_{i=1}^n \alpha_i^2 - \frac{n^3-n}{12}.
\end{equation}

Similarly, summing the eigenvalues in Table \ref{table:elementary-differential-operators3} over all $(i_1,i_2,i_3) \in \{1, \dots, n\}^3$, we obtain our explicit formula for the eigenvalue of the Casimir operator $\mathcal{D}_{3,n}$ of order $3$ for $\mathfrak{h}^n$ associated with $\phi$, as given in Theorem \ref{thm:opening}:
\begin{equation}
\sum_{i=1}^n \alpha_i^3 - \frac{n}{2}\sum_{i=1}^n \alpha_i^2 + \frac{n^4-n^2}{24}.
\end{equation}
\end{proof}

\section*{References}

\printbibliography[heading=none] 
\end{document}